\documentclass{amsart}
\pdfoutput=1


\usepackage{amssymb}
\usepackage{microtype}
\usepackage{tikz}
\usetikzlibrary{arrows}
\usepackage{booktabs}
\usepackage[pdftex,colorlinks,citecolor=black,linkcolor=black,urlcolor=black,bookmarks=false]{hyperref}
\def\MR#1{\href{http://www.ams.org/mathscinet-getitem?mr=#1}{MR#1}}
\def\arXiv#1{arXiv:\href{http://arXiv.org/abs/#1}{#1}}
\usepackage{doi}
\usepackage{subfigure}
\usepackage{bbm} 
\usepackage{xcolor}
\usepackage{colortbl}

\newtheorem{theorem}{Theorem}[section]
\newtheorem{proposition}[theorem]{Proposition}

\newtheorem{corollary}[theorem]{Corollary}

\theoremstyle{definition}

\numberwithin{figure}{section}
\numberwithin{equation}{section}
\numberwithin{table}{section}

\newcommand{\Z}{\mathbb{Z}}
\newcommand{\Q}{\mathbb{Q}}
\newcommand{\R}{\mathbb{R}}
\newcommand{\C}{\mathbb{C}}

\newcommand{\SL}{{\rm SL}}
\newcommand{\lcm}{\operatorname{lcm}}

\renewcommand{\Im}{\operatorname{Im}}
\renewcommand{\Re}{\operatorname{Re}}

\title{Six-dimensional sphere packing and linear programming}

\author{Matthew de Courcy-Ireland, Maria Dostert, Maryna Viazovska}
\address{Institute of Mathematics\\
EPFL\\
CH-1015 Lausanne, Switzerland and Department of Mathematics, Stockholm University} \email{matthew.decourcy-ireland@math.su.se}
\address{Department of Mathematics\\
  KTH Royal Institute of Technology, Stockholm, Sweden}
\email{maria.dostert@gmail.com}
\address{Institute of Mathematics\\
EPFL\\
CH-1015 Lausanne, Switzerland} \email{viazovska@gmail.com}

\date{July 26, 2023}

\begin{document}

\begin{abstract}
We prove that the Cohn--Elkies linear programming bound for sphere packing is not sharp in dimension 6. The proof uses duality and optimization over a space of modular forms, generalizing a construction of Cohn--Triantafillou to the case of odd weight and non-trivial character.
\end{abstract}

\maketitle

\section{Introduction} \label{sec:introduction}

The method of linear programming gives an upper bound for the density achievable by sphere packings in Euclidean space of any given dimension. In some cases, especially dimensions 8 and 24 as shown in \cite{V,CKMRV}, the bound is equal to the density of a known configuration, giving a proof of optimality. In most dimensions, we do not know the optimal packing, and we have only a numerical approximation to the linear programming bound. The optimal density and the linear programming bound are not expected to be equal in general.
It seems plausible that equality occurs only in dimensions 1, 2, 8, and 24, which are the values conjectured by Cohn and Elkies \cite[Conjecture 7.3]{CE}, with 2 the last of the expected sharp cases to defy proof.
However, it has only recently become possible to rule out the scenario that linear programming gives a sharp bound in all dimensions: it provably fails in dimensions 3, 4, 5, 12, and 16 by work of \cite[Corollary 1.3]{CdLS}, \cite[sections 6-7]{L} for the cases 3, 4, 5, and \cite[Table 6.1]{CT} for 12 and 16.
In this article, we prove that the bound is not sharp in dimension 6. Intuitively, the reason is that the linear programming method applies not only to packings, but also to ``fake packings" with impossibly high densities. We construct one such example.

The first obstacle is that the densest packing in dimension 6 is not known. We therefore compare the linear programming bound (LP bound) to a more powerful bound, based on semidefinite programming (SDP) and developed by Cohn, de Laat, and Salmon. We refer to \cite[Table 1.1]{CdLS} for bounds in a range of dimensions.
\begin{theorem}[Cohn, de Laat, Salmon] \label{thm:sdp}
The sphere packing density in dimension $6$ is at most $0.410304$. 
\end{theorem}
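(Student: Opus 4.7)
The plan is to invoke the three-point semidefinite programming hierarchy of Cohn--de Laat--Salmon, which strengthens the Cohn--Elkies two-point linear programming bound. The first step is to set up the primal SDP: one optimizes over a radial Schwartz function $f\colon\R^6\to\R$ together with auxiliary kernels $F_k(r,s,t)$ indexed by the $O(6)$-isotypic components appearing in the decomposition of three-point correlation functions on $\R^6$. The constraints demand that $f(x)+\sum_k\langle F_k,\Phi_k(x,\cdot,\cdot)\rangle \le 0$ for all $|x|\ge 1$, that the Fourier transform $\widehat{f}$ and appropriate projections of the $F_k$ be pointwise nonnegative (encoded as positive semidefiniteness of matrix-valued kernels in the $(r,s)$ variables via a Gegenbauer expansion in the angular variable), and a normalization $f(0)+\text{diagonal contribution}=1$. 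The optimum of this SDP is an upper bound on the sphere packing density in dimension $6$, tighter than the LP bound.

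Second, one would discretize as in \cite{CdLS}: expand $f$ in a basis of Laguerre- or Hermite-type functions with sample nodes, expand each $F_k$ in a finite polynomial basis times Gaussians of varying widths, sample the inequality constraints on a sufficiently fine grid of radii in $[1,\infty)$ (with a suitable tail bound making the sampling essentially a finite problem), and solve the resulting finite-dimensional SDP with a high-precision solver such as SDPA-GMP. Numerical experiments of this form produce the value $0.410304$ as an upper bound in dimension $6$.

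Third, because a numerical SDP optimum is only an approximation, one must produce a certified feasible dual solution. This is done by rounding the numerical multipliers to rational or algebraic numbers, adjusting slack so the positive semidefiniteness of the Gram matrices and the pointwise sign conditions are verified either symbolically or via interval arithmetic on the polynomial parts and explicit tail estimates on the Gaussian factors. The certified dual value then rigorously proves the bound.

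The main obstacle is not conceptual but computational: maintaining numerical conditioning of the SDP at the precision needed for the claimed four-decimal bound, and in particular the rigorous rounding step that converts a floating-point near-optimizer into a provably feasible dual solution without losing too much in the objective. As the theorem is precisely the outcome of this procedure carried out in \cite{CdLS}, our proof is to cite their table and computation.
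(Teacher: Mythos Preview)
Your proposal is correct and matches the paper's treatment: Theorem~\ref{thm:sdp} is not proved in the paper at all but is quoted from \cite[Table~1.1]{CdLS}, and your final sentence does exactly that. The preceding sketch of the three-point SDP method is accurate background but is not needed here, since neither you nor the paper actually carry out the computation.
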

In contrast, we prove that the LP bound is constrained by the following ``bound on the bound".
\begin{theorem} \label{thm:main}
The LP bound for density in dimension $6$ is at least $0.410948$.
\end{theorem}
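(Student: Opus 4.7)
The strategy is to construct a dual certificate for the Cohn--Elkies LP --- essentially a ``fake packing'' --- that witnesses density at least $0.410948$ in $\R^6$. Concretely, one exhibits a positive radial measure $\mu$ supported in $\{|x| \geq 1\}$ together with a nonnegative Fourier-side distribution such that, after adding point masses at the origin on both sides, a Poisson-type identity holds. Pairing any admissible Cohn--Elkies test function $f$ against this configuration forces $f(0) \geq \delta \, \hat f(0)$, so the LP bound is bounded below by $\delta \cdot \vol(B_{1/2})$, and it suffices to reach $\delta$ with this product exceeding $0.410948$.

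To produce the measures I would follow the modular-form recipe of Cohn--Triantafillou (who handled dimensions $12$ and $16$), adapted to dimension $6$. The natural weight here is $d/2 = 3$, which is odd, so the construction must use modular forms on $\Gamma_0(N)$ with nontrivial nebentypus character $\chi$. Starting from a form $\phi \in M_3(\Gamma_0(N), \chi)$, two Laplace-type integral transforms of $\phi$ against the Gaussian $e^{-\pi t |x|^2}$ should yield radial Schwartz functions $a_+, a_-$ on $\R^6$ that are $\pm 1$-eigenfunctions of the Fourier transform, with the modular transformation under $\tau \mapsto -1/(N\tau)$ (combined with $\chi$) producing the eigenvalue identities. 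The fake packing then arises from a signed combination $\alpha a_+ + \beta a_-$ whose sign pattern matches the LP. Varying $N$, $\chi$, and the coefficients of $\phi$ turns this into a finite-dimensional optimization: discretizing the pointwise sign constraints on $a_\pm$ gives a semidefinite program whose maximum $\delta$ must be pushed past the SDP upper bound $0.410304$ of Theorem \ref{thm:sdp} and up to the target $0.410948$.

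The main obstacle is converting the numerical optimum into a rigorous proof, and splits into two pieces. First, the eigenfunction identities $\widehat{a_\pm} = \pm a_\pm$ must be re-derived for odd weight with nontrivial character: the cusp and residue contributions in the underlying contour manipulation differ from the even-weight, trivial-character case treated in Cohn--Triantafillou, and this is where the genuine new work appears. Second, the sign of $\alpha a_+ + \beta a_-$ on the entire half-line $[1, \infty)$ (and of its Fourier transform) must be certified, typically by explicit tail bounds on the $q$-expansion of $\phi$ combined with interval arithmetic on a finite grid and monotonicity estimates beyond it. The density gap over Theorem \ref{thm:sdp} is small enough that both the choice of $(N,\chi,\phi)$ and the numerical tolerances must be tight, making rigorous positivity verification rather than the algebraic setup the true bottleneck.
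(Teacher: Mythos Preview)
Your high-level strategy is right --- a dual certificate for the Cohn--Elkies LP, built from modular forms of weight $3$ on $\Gamma_0(N)$ with nontrivial character --- and this is exactly the paper's framework. But the technical implementation you describe is not the one used, and as stated it would not produce the required object.

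The paper does \emph{not} build continuous Schwartz eigenfunctions via Laplace-type integral transforms. That machinery (integrating a modular or quasimodular form against $e^{-\pi t|x|^2}$ to obtain $\pm 1$-Fourier-eigenfunctions) is Viazovska's \emph{primal} construction for the magic function $f$ in dimensions $8$ and $24$; it produces a smooth radial function, not a measure with a point mass at the origin. For the dual program one needs $\mu = \delta_0 + \nu$ with $\nu \geq 0$ supported in $\{|x| \geq r\}$ and $\widehat\mu \geq c\,\delta_0$; a smooth density cannot satisfy the last condition for any $c > 0$. Instead, following Cohn--Triantafillou, the paper takes the Fourier coefficients $a_n$ of the modular form $g$ \emph{directly} as the weights of spherical delta masses: $\mu = \sum_{n \geq 0} a_n \delta_{\sqrt{n}}$. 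A short calculation (Proposition~\ref{prop:fourier}) shows that the $D$-dimensional Fourier transform of this discrete measure is another discrete measure with weights $b_n$ equal to the coefficients of the Atkin--Lehner image $\widetilde g(z) = (-iz)^{-k} N^{-k/2} g(-1/(Nz))$. No contour integrals or new eigenfunction identities are needed; the only ``odd weight'' adaptation is that one must take $g$ in $M_3(\Gamma_0(N),\chi)$ with $\chi(-1)=-1$, and the paper uses $N=48$ with $\chi \in \{\chi_3,\chi_4\}$.

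Consequently the verification step is also different from what you outline. There is no continuous function whose sign must be certified on $[1,\infty)$ by interval arithmetic and tail bounds. One must instead show $a_n \geq 0$ and $b_n \geq 0$ for all integers $n$, with $a_0=1$, $a_n=0$ for $1 \leq n < T$ (here $T=7$), and $b_0$ maximized. The optimization over the $44$-dimensional space $M_3(\Gamma_0(48),\chi_3)\oplus M_3(\Gamma_0(48),\chi_4)$ is a \emph{linear} program, not an SDP. The infinitely many remaining inequalities are proved by decomposing $g$ into Eisenstein and cuspidal parts: the Eisenstein coefficient satisfies $a_{n,\mathrm{eis}} \geq \varepsilon n^{k-1}$ for an explicit $\varepsilon>0$, while Deligne's bound gives $|a_{n,\mathrm{cusp}}| \leq C n^{(k-1)/2}\sigma_0(n)$, so $a_n \geq 0$ for all large $n$, and the finitely many survivors are checked by direct computation. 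The genuinely delicate points are making $\varepsilon$ explicit across the many residue classes of $n$ modulo $48$, and handling the fact that $a_n = 0$ identically on the progression $n \equiv 1 \bmod 4$ (which must be proved exactly, via quadratic twists, rather than bounded away from zero).
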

\begin{corollary} \label{cor:cor}
The linear programming bound in dimension $6$ is strictly higher than the density of any packing.
\end{corollary}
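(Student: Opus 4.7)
The plan is to deduce the corollary as a direct numerical consequence of Theorems \ref{thm:sdp} and \ref{thm:main}, which together sandwich the sphere packing density and the LP bound on opposite sides of a strict inequality. Theorem \ref{thm:sdp} tells us that any packing in $\R^6$, lattice or otherwise, achieves density at most $0.410304$, while Theorem \ref{thm:main} tells us that the LP bound in dimension $6$ is at least $0.410948$. Since
\[
0.410304 < 0.410948,
\]
the LP bound exceeds every achievable density by at least $0.000644$, which in particular is strictly positive.

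More formally, I would write out the chain of inequalities: for every sphere packing $P \subset \R^6$,
\[
\operatorname{density}(P) \;\le\; 0.410304 \;<\; 0.410948 \;\le\; \text{LP bound in dimension } 6,
\]
where the first inequality is Theorem \ref{thm:sdp}, the final inequality is Theorem \ref{thm:main}, and the middle inequality is a routine numerical comparison of two decimals. Taking the supremum over $P$ on the left does not close the gap, because the SDP upper bound applies uniformly to all packings simultaneously.

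The only real content to verify is that Theorem \ref{thm:sdp} controls the density of \emph{every} packing (periodic or not), which is part of the standard setup of the Cohn--de Laat--Salmon SDP hierarchy; I would simply cite \cite{CdLS} for this. The main obstacle, of course, lies entirely in Theorem \ref{thm:main}: constructing an admissible dual modular-form witness whose associated Cohn--Elkies function forces the LP bound above $0.410948$. Once that construction is in hand, the corollary takes a single line.
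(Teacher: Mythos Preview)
Your proposal is correct and matches the paper's approach exactly: the corollary is stated as an immediate consequence of Theorems~\ref{thm:sdp} and~\ref{thm:main}, with no further argument beyond the numerical comparison $0.410304 < 0.410948$. One small wording nitpick: the modular-form construction in Theorem~\ref{thm:main} produces a measure $\mu$ for the \emph{dual} program (Theorem~\ref{thm:dual}), not a Cohn--Elkies function $f$; but this does not affect the logic of the corollary.
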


For comparison, the highest density known in dimension 6 is
\begin{equation} \label{eqn:e6-density}
\frac{\pi^3}{48\sqrt{3}} = 0.372947 \ldots
\end{equation}
This is achieved by the $E_6$ root lattice \cite[p. 126]{splag}, which is closely related to the sharp case of $E_8$ in dimension 8. Numerical optimization over a restricted class of functions in the linear program gives an upper bound 0.417674, which is presumably very close to the true value, but in principle might differ considerably from the optimum over the full infinite-dimensional space of candidates.
Theorem~\ref{thm:main} shows that the true value cannot be much lower, and in particular it exceeds both the highest known density (\ref{eqn:e6-density}) and the upper bound from Theorem~\ref{thm:sdp}. The proof sheds some light on why the linear programming bound is not sharp in dimension 6, as it is for dimension 8, despite the close connection between the exceptional structures $E_6$ and $E_8$.
On the other hand, it is known that there are infinitely many other packings achieving the same density as $E_6$, but with no particularly close connection to $E_8$. They are obtained by stacking layers of the (presumed) densest 5-dimensional packing \cite[p. 144]{splag}.

The proof of Theorem~\ref{thm:main} uses duality for linear programs. To describe it further, let us recall more specifically how the LP bound works.
It is convenient to work with the ``center density" of a packing, which is the density divided by the volume of a ball in $\R^D$ of the corresponding radius. This simplifies various factors of $\pi$, and can be thought of as the number of centers per unit volume.
For example, the center density of $E_6$ is the value from (\ref{eqn:e6-density}) divided by $\pi^{D/2}/(D/2)! = \pi^3/6$, namely
\begin{equation} \label{eqn:e6-center-density}
\frac{\pi^3}{48\sqrt{3}} \div \frac{\pi^3}{6} = \frac{1}{8\sqrt{3}} = 0.072168\ldots
\end{equation}
Theorem~\ref{thm:sdp} implies that all packings have center density at most 0.079398.
Theorem~\ref{thm:main} shows that the LP bound for center density is at least 0.0795223, and numerical optimization leads to 0.08084 as an upper bound.
In fact, our number 0.0795223 is rounded down from a quadratic irrational. What we really show is that the LP bound for center density in dimension 6 is at least
\begin{align}
 &\frac{- 277385984684414834701547634832199852580621960702176236773103}{535700179589322461444902359627590796379300404334023027566592} \label{exact-quadratic} \\
&+ \frac{554232205790268185636220216828951751933789602521848882511869}{1607100538767967384334707078882772389137901213002069082699776}\sqrt{3} \nonumber 
\end{align}
hence greater than
\begin{equation} \label{eqn:digits}
0.079522333845052286373845030218205166528
\end{equation}
The fractions in (\ref{exact-quadratic}) work out to $\alpha + \beta\sqrt{3}$ where $\alpha \approx -0.5178$ and $\beta \approx 0.3448$.

\begin{theorem}[Cohn--Elkies, \cite{CE}] \label{thm:cohn-elkies}
If an integrable, continuous function $f: \R^D \rightarrow \R$ with Fourier transform $\widehat{f}$ is not identically $0$ and satisfies $f(x) \leq 0$ for $|x| \geq r$, and $\widehat{f} \geq 0$,
then the center density of any sphere packing in $\R^{D}$ is at most
\begin{equation} \label{eqn:ce}
\left( \frac{r}{2} \right)^D f(0) \div \widehat{f}(0)
\end{equation}
\end{theorem}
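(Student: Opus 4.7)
The plan is to establish the bound first for periodic packings and then pass to the limit. A periodic packing is one whose centers form a finite union of translates $v_1,\ldots,v_N$ of a full-rank lattice $\Lambda \subset \R^D$; since the density of any packing can be approximated arbitrarily well by periodic ones, it suffices to show
\[
\frac{N}{\vol(\R^D/\Lambda)} \;\le\; \frac{f(0)}{\widehat{f}(0)},
\]
because the center density of such a packing is $(r/2)^D N / \vol(\R^D/\Lambda)$.

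Given a periodic packing with minimum distance $r$, I would compute the double sum
\[
S \;=\; \sum_{i,j=1}^{N} \sum_{x \in \Lambda} f(x + v_i - v_j)
\]
in two ways. The non-overlap condition forces $|x + v_i - v_j| \ge r$ off the diagonal $i=j$, $x=0$, so the hypothesis $f(x) \le 0$ for $|x| \ge r$ gives the upper bound $S \le N f(0)$. For the lower bound, I would apply Poisson summation to the inner sum and reassemble the $i,j$ indices into a squared modulus, obtaining
\[
S \;=\; \frac{1}{\vol(\R^D/\Lambda)} \sum_{y \in \Lambda^*} \widehat{f}(y) \,\Bigl| \sum_{i=1}^{N} e^{2\pi i\, y \cdot v_i} \Bigr|^{2}.
\]
Since $\widehat{f} \ge 0$, every term is non-negative, so retaining only $y = 0$ yields $S \ge \widehat{f}(0) N^2 / \vol(\R^D/\Lambda)$. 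Combining the two estimates produces the claimed inequality, hence the theorem for periodic packings.

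The main technical obstacle is justifying Poisson summation under the minimal hypothesis of continuity and integrability, which in general requires additional decay of $f$ and $\widehat{f}$ to ensure absolute convergence of both sides. A standard remedy is to mollify $f$ by convolution with a narrow Gaussian, producing a Schwartz function $f_{\varepsilon}$ that inherits the sign conditions on a slightly enlarged support; one applies the argument to $f_{\varepsilon}$, obtains the bound with constants $f_{\varepsilon}(0)$, $\widehat{f_{\varepsilon}}(0)$ and radius $r+\varepsilon$, and sends $\varepsilon \to 0$. Alternatively, one may restrict to Schwartz functions from the outset, which does not change the supremum that defines the LP bound.
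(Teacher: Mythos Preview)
Your proposal is correct and is essentially the standard Cohn--Elkies argument via Poisson summation on periodic packings. Note, however, that the paper does not give its own proof of this theorem: it is stated with a citation to \cite{CE}, and the surrounding text simply records that the original formulation assumed enough decay for Poisson summation, with later relaxations in \cite{CK07} and \cite{CZ14}. Your discussion of the regularity obstacle and the mollification workaround matches that remark, so there is nothing further to compare.
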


By the Cohn--Elkies linear programming bound, or simply the LP bound, we mean the infimum of (\ref{eqn:ce}) over all admissible $f$. We have assumed $f$ is continuous and integrable so that $f(0)$ and $\widehat{f}(0) = \int f$ are defined in the most straightforward way. It is also of interest to take the infimum over other classes of functions (for instance, Schwartz), and these variants could also be called ``the" LP bound. 
In fact, the original formulation from \cite{CE} assumed that $f$ is sufficiently smooth and rapidly decaying so that both sides of the Poisson summation formula converge. This was later relaxed in \cite[section 9]{CK07} and \cite[Theorem 3.3]{CZ14}.

The following theorem is an instance of weak duality for linear programs. It appears in \cite{Co} and a version of the same linear program, expressed in terms of pair-correlation functions, was studied by Torquato and Stillinger \cite{TS}. Depending on the space of $f$ one considers for the Cohn--Elkies bound, the dual object $\mu$ will be one kind of generalized function or another. We suppose $f$ varies over continuous functions, so that $\mu$ varies over measures.
\begin{theorem} \label{thm:dual}
If $\mu = \delta_0 + \nu$ is a measure on $\R^{D}$ with $\nu \geq 0$ supported in $\{ x \in \R^D \ ; \  |x| \geq r \}$ and $\widehat{\mu} \geq c \delta_0$, then the Cohn--Elkies linear programming bound for center density in $\R^D$ is at least
\begin{equation} \label{eqn:dual}
c \left( \frac{r}{2} \right)^D.
\end{equation}
\end{theorem}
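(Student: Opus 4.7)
The plan is to establish weak duality by pairing an admissible primal $f$ against the dual measure $\mu$ and using Plancherel. Let $f \colon \R^D \to \R$ be continuous, integrable, not identically zero, with $f(x) \leq 0$ for $|x| \geq r$ and $\widehat{f} \geq 0$. The goal is to show that the Cohn--Elkies ratio satisfies
\begin{equation*}
\left(\frac{r}{2}\right)^D \frac{f(0)}{\widehat{f}(0)} \;\geq\; c\left(\frac{r}{2}\right)^D,
\end{equation*}
since taking the infimum over all such $f$ then yields the theorem. Dividing by $(r/2)^D$, the claim reduces to $f(0) \geq c\,\widehat{f}(0)$.

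The first step is the sign estimate on the physical side. Write
\begin{equation*}
\int_{\R^D} f \, d\mu \;=\; f(0) + \int_{|x| \geq r} f \, d\nu,
\end{equation*}
using the decomposition $\mu = \delta_0 + \nu$ and the support condition on $\nu$. Because $f \leq 0$ on the support of $\nu$ and $\nu$ is a non-negative measure, the second term is $\leq 0$, giving $\int f\,d\mu \leq f(0)$. The second step is the corresponding lower bound on the Fourier side: under the hypothesis $\widehat{\mu} \geq c\delta_0$, meaning $\widehat{\mu} - c\delta_0$ is a non-negative (tempered) measure, pairing with $\widehat{f} \geq 0$ gives
\begin{equation*}
\int_{\R^D} \widehat{f} \, d\widehat{\mu} \;\geq\; c\,\widehat{f}(0).
\end{equation*}
The final step is the Parseval identity $\int f\,d\mu = \int \widehat{f}\,d\widehat{\mu}$, which when chained with the two inequalities above yields $f(0) \geq c\,\widehat{f}(0)$ as desired.

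The main technical obstacle is the Parseval identity itself, because $\mu$ is only assumed to be a measure and $f$ only continuous and integrable; neither is a Schwartz function, and $\widehat{\mu}$ has to be interpreted as a tempered distribution (with the hypothesis $\widehat{\mu} \geq c\delta_0$ meaning that $\widehat{\mu}-c\delta_0$ is given by a non-negative Radon measure). One would justify this either by assuming enough decay on $\mu$ so that $\widehat{\mu}$ is a tempered measure and both integrals converge absolutely, or by a standard approximation argument: convolve $f$ with an approximate identity $\varphi_\varepsilon$ of Schwartz type so that $f \ast \varphi_\varepsilon$ is Schwartz, apply the classical Parseval pairing $\langle f \ast \varphi_\varepsilon, \mu\rangle = \langle \widehat{f}\,\widehat{\varphi_\varepsilon},\widehat{\mu}\rangle$, and pass to the limit using dominated convergence on the physical side and monotone convergence (with $\widehat{\varphi_\varepsilon} \nearrow 1$) on the Fourier side. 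Both sign inequalities are preserved under this limit, so the chain of inequalities above goes through and completes the proof.
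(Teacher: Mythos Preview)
Your argument is essentially the same as the paper's: pair an admissible $f$ against $\mu$, use the sign conditions to get $\langle f,\mu\rangle \le f(0)$ and $\langle \widehat{f},\widehat{\mu}\rangle \ge c\,\widehat{f}(0)$, and link them via Plancherel. The paper makes explicit one small point you leave implicit---that by rescaling $f(x)\mapsto f(s'x)$ the Cohn--Elkies ratio is unchanged, so one may assume $f$ and $\mu$ share the same $r$---while you, conversely, spell out the approximation needed to justify the Parseval step that the paper simply asserts.
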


\begin{proof}
The duality at work here is the Plancherel formula. Let $f$ and $\mu$ be as in the Cohn--Elkies program and its dual.
We can assume it is the same $r$ for both $f$ and $\mu$, by scaling if necessary. Indeed, replacing $f(x)$ by $sf(s'x)$ for $s,s' > 0$ does not change (\ref{eqn:ce}).
Then $\langle f, \mu \rangle \leq f(0)$ because $f \leq 0$ on the support of $\mu - \delta_0$. By Plancherel, since $\widehat{\mu} \geq c \delta_0$ and $\widehat{f} \geq 0$,
\begin{equation} \label{eqn:plancherel}
f(0) \geq \langle f, \mu \rangle = \langle \widehat{f}, \widehat{\mu} \rangle \geq c\widehat{f}(0)
\end{equation}
so $f(0)/\widehat{f}(0) \geq c$. Since this holds for any $f$ satisfying the Cohn--Elkies constraints, the LP bound is at least $c (r/2)^D$. 
\end{proof}

In other words, the LP bound thinks there is a packing of center density $c(r/2)^D$, even if there is only a measure imitating such a packing. For comparison, given a lattice $\Lambda$ with center density $\rho$ and dual lattice $\Lambda^*$, the following measure has transform given by Poisson summation:
\[
\mu = \sum_{x \in \Lambda} \delta_x, \quad \widehat{\mu} = \rho \sum_{\xi \in \Lambda^*} \delta_{\xi}
\]
The factor $c$ in this case is exactly $\rho$, and the result from Theorem~\ref{thm:dual} is the same as one would find by comparing the LP bound to the packing with spheres centered at the points of $\Lambda$ (here, we scale so that the spheres in the packing have radius 1).
In this sense, the $\mu$ in the ``dual" program has a more direct connection to the underlying packings, compared to the Cohn--Elkies function $f$. In the sharp cases in 8 or 24 dimensions, the packing is related to the zeros of $f$ and $\widehat{f}$, and $f$ can be recovered from these zeros by interpolation \cite{CKMRV, V}.

Figure~\ref{fig:spikes} compares our candidate $\mu$ to a numerical approximation of the optimal $f$. 
The proof of Theorem~\ref{thm:dual} shows that, for $f$ and $\mu$ to give optimal bounds, $\mu - \delta_0$ should be supported on the zeros of $f$. That way, there is no loss in the inequality $f(0) \geq \langle f, \mu \rangle$. Similarly, $\widehat{\mu}$ should be supported on the zeros of $\widehat{f}$ together with the origin. In our example, the support is larger, but aligns well with the initial zeros. The function plotted in Figure~\ref{fig:spikes} approximates $f$ by a polynomial of degree 24 times a Gaussian $\exp(-\pi x)$, written as a function of squared distance. 
It was computed in Julia using sum-of-squares techniques implemented by de Laat and Leijenhorst in \cite{dLL}. For visibility, we plot only the polynomial factor without the Gaussian, which of course vanishes at the same points as $f$.

\begin{figure}
\includegraphics[width=0.75\textwidth]{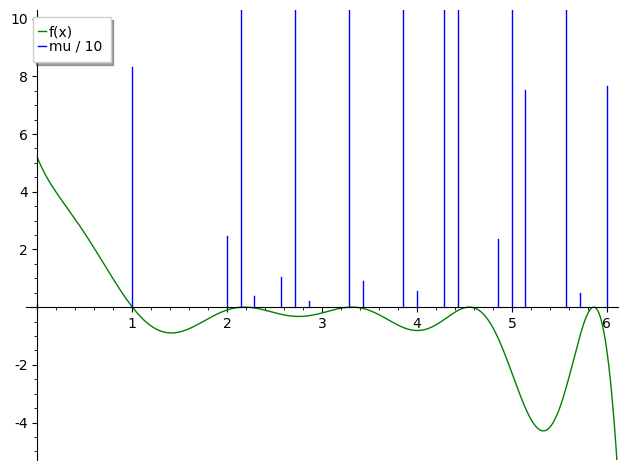}
\includegraphics[width=0.75\textwidth]{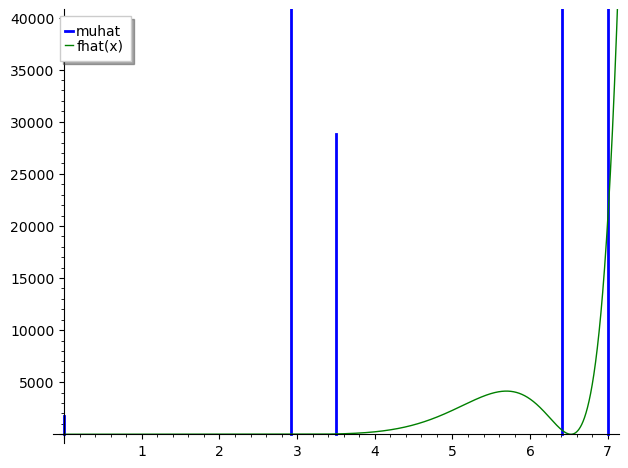}
\caption{
Top: 
a numerical approximation to the optimal Cohn--Elkies function $f$ in green, compared to the measure $\mu = \sum_n a_n \delta_{\sqrt{n}}$ in blue, as functions of squared distance $x$. We rescale $\mu$ by $n \mapsto n/7$ so that the first spike coincides with the sign change of $f$ at $x=1$. 
Bottom: $\widehat{\mu} = (2/\sqrt{N})^k \sum_n b_n \delta_{2\sqrt{n/N}}$ in blue compared to the polynomial part of $\widehat{f}$ in green.
With $N=48$, and scaling the Fourier transform reciprocally to $\mu$ by $n \mapsto 7n$, the spikes of $\widehat{\mu}$ occur at $x= 7n/12$. 
For visibility, without affecting the locations of zeros or spikes, the plots show only the polynomials $f(x) \exp(\pi x)$ and $\widehat{f}(x) \exp(\pi x)$ without the Gaussian factor, and large values of $a_n$ or $b_n$ are cut off.
The first two zeros of both $f$ and $\widehat{f}$ align closely with the supports of $\mu$ and $\widehat{\mu}$. 
}
\label{fig:spikes}
\end{figure}

To make a $\mu$ satisfying the constraints of Theorem~\ref{thm:dual}, Cohn and Triantafillou \cite{CT} proposed
\[
\mu = \sum_{n=0}^{\infty} a_n \delta_{\sqrt{n}}
\]
where $a_n$ are the Fourier coefficients of a modular form.
They assumed that the dimension $D$ is a multiple of 4, and considered modular forms of weight $D/2$ with trivial character on $\Gamma_0(N)$. Suitable values of $N$ gave results in dimensions 12 and 16, showing that the linear programming bound does not match the densities of the presumed best packings. However, these dimensions were too high to make a favourable comparison with semidefinite methods at the time. The possibility of a better packing remained, which could even match the LP bound, until \cite{CdLS} gave a strong enough SDP bound in dimensions 12 and 16 to show that the earlier bounds from \cite{CT} imply non-sharpness in those cases.
In dimension 6, we apply a similar strategy, showing decisively that the linear programming bound is not sharp by comparing it to Theorem~\ref{thm:sdp}. 
Theorem~\ref{thm:main} follows from the existence of a suitable modular form. Since 6 is not a multiple of 4, we must work with modular forms of odd weight 3, using non-trivial Dirichlet characters. In particular, we used the quadratic characters $\chi_3$ and $\chi_4$ defined by $-1 \bmod 3 \mapsto -1$ and $-1 \bmod 4 \mapsto -1$ respectively.
For each of these there are spaces of modular forms denoted by $M_k(\Gamma_0(N),\chi)$, which we will discuss further in Sections~\ref{sec:ct} and \ref{sec:space}.

\begin{proposition} \label{prop:mf}
There is a modular form
\[
g \in M_3(\Gamma_0(48),\chi_3) \oplus M_3(\Gamma_0(48),\chi_4)
\]
whose Fourier expansion and that of the transform $\widetilde{g}(z) = -i 48^{-3/2} z^{-3}g(-1/(48z))$ satisfy
\[
g(z) = \sum_{n=0}^{\infty} a_n e^{2\pi inz}, \quad \widetilde{g}(z) = \sum_{n=0}^{\infty} b_n e^{2\pi inz}
\]
where $a_n \geq 0$ and $b_n \geq 0$ for all $n$, $a_0=1$, $b_0 \geq 0.6168035$, and $a_n = 0$ for $1 \leq n \leq 6$. Moreover, $a_n = 0$ for all $n \equiv 1 \bmod 4$.
\end{proposition}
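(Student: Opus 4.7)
The plan is to construct $g$ explicitly as an element of the finite-dimensional space
$V = M_3(\Gamma_0(48), \chi_3) \oplus M_3(\Gamma_0(48), \chi_4)$,
adapting the Cohn--Triantafillou strategy \cite{CT} to the odd-weight, non-trivial-character setting. As a first step, I would fix bases of both summands consisting of Eisenstein series of the shape
$\sum_{n} \bigl(\sum_{d \mid n} \chi_1(d) \chi_2(n/d) d^2 \bigr) q^n$
with $\chi_1 \chi_2 \in \{\chi_3, \chi_4\}$ at conductor dividing $48$, together with the finitely many cuspidal newforms and their $V_d$-translates at levels dividing $48$. Since $\chi_3$ and $\chi_4$ are real, the Fricke involution $W_{48}$ preserves each summand of $V$. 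For weight~$3$ one checks $(f|_3 W_{48})|_3 W_{48} = -f$, and the extra factor $-i$ in the definition of $\widetilde{g}$ is precisely what promotes $g \mapsto \widetilde{g}$ to a genuine real-linear involution on $V$, with matrix computable in the chosen basis over $\Q(\sqrt{3})$.

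Next I would set up the linear constraints on the Fourier coefficients: $a_0 = 1$, the seven vanishings $a_1 = \cdots = a_6 = 0$, and the a priori infinite family $a_n = 0$ for $n \equiv 1 \pmod 4$. The last family reduces to finitely many conditions, because the projection
\[
g(z) \longmapsto \tfrac{1}{4} \sum_{j=0}^{3} e^{-\pi i j/2}\, g\bigl(z + j/4\bigr) \;=\; \sum_{n \equiv 1 \pmod 4} a_n q^n
\]
lands in a modular form space of finite dimension at a slightly larger level, so its vanishing amounts to finitely many scalar equations on the coefficients of $g$ in $V$. The residual freedom is then used to maximize $b_0$, which is a linear functional on $V$. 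Since all the relevant matrix entries lie in $\Q(\sqrt{3})$, this is a finite-dimensional linear program whose exact solution is the quadratic irrational recorded in~(\ref{exact-quadratic}), in particular exceeding $0.6168035$.

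The principal obstacle is verifying the positivity conditions $a_n \geq 0$ and $b_n \geq 0$ for every $n \geq 0$, an infinite system of inequalities. Decompose $g = E + C$ into its Eisenstein and cuspidal components, and similarly $\widetilde g = \widetilde E + \widetilde C$. The Eisenstein coefficients are explicit twisted divisor sums, and on the residue classes modulo $48$ where the character values do not force cancellation they admit a pointwise lower bound growing as $\kappa n^2$ for an explicit $\kappa > 0$. The cuspidal coefficients obey the Deligne bound $|C_n| \leq d(n)\,n$ (multiplied by the Petersson norm of the cuspidal part). Hence there is an effective threshold $N_0$ beyond which the Eisenstein lower bound swamps the cuspidal upper bound, and positivity for $n \leq N_0$ is confirmed by a finite explicit computation of Fourier coefficients. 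The delicate point, which dictates the choice of level $48 = 16 \cdot 3$ together with the two characters $\chi_3$ and $\chi_4$, is to arrange that the Eisenstein contribution has strictly positive coefficients on every residue class compatible with the vanishing constraints while keeping $N_0$ small enough to permit direct verification, and simultaneously attaining a value of $b_0$ large enough to beat the SDP bound of Theorem~\ref{thm:sdp}.
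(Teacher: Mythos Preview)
Your outline is essentially the paper's approach: the same $44$-dimensional space, the Fricke matrix computed over $\Q(\sqrt{3})$, a finite linear program for the equality constraints followed by maximizing $b_0$, and the tail positivity handled via the Eisenstein/cuspidal decomposition with Deligne's bound. The one genuine methodological difference is how you dispose of the family $a_n=0$ for $n\equiv 1\bmod 4$: you invoke the abstract projection $g\mapsto\sum_{n\equiv 1}a_nq^n$ into a finite-dimensional space of higher level, whereas the paper writes the condition as ten explicit linear relations among the basis coefficients, obtained because the level-$48$ basis already contains the $\chi_4$-twists of the relevant lower-level forms, so that pairwise cancellations such as $x_f+x_{f\otimes\chi_4}=0$ force the vanishing directly. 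Both arguments are correct; the explicit version has the advantage that the ten relations slot straight into the $44\times 44$ exact linear system over $\Q(\sqrt{3})$ that pins down the solution, and it explains \emph{why} level $48$ is the right choice.

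One point where your sketch is too optimistic: the Eisenstein lower bound is not governed merely by the residue of $n$ modulo $48$. The coefficient $a_{n,\mathrm{eis}}$ has the shape $\sigma_3^+(n_0)X_3+\sigma_4^+(n_0)X_4$ with $n_0$ the part of $n$ coprime to $6$, and in the worst regime ($n=12n_0$, $n_0\equiv 5\bmod 12$, $n_0$ rich in primes $\equiv 7\bmod 12$) one needs a sharp two-sided estimate on the ratio $\sigma_4^+(n_0)/\sigma_3^+(n_0)$ even to get a positive $\kappa$, and then only $\kappa\approx 8.75\times 10^{-6}$. Against the cuspidal constant $C\approx 21.6$ this pushes the threshold $N_0$ to about $5\times 10^{9}$; the finite check is feasible but is a multi-day computation, not a formality.
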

Again, this is a rounded version of a more precise statement: the $g$ we construct has $b_0$ equal to the value from (\ref{exact-quadratic}) multiplied by $\frac{768}{343}\sqrt{12}$, via (\ref{eqn:bound}) below.

\begin{figure}
\centering
\includegraphics[width=0.475\textwidth]{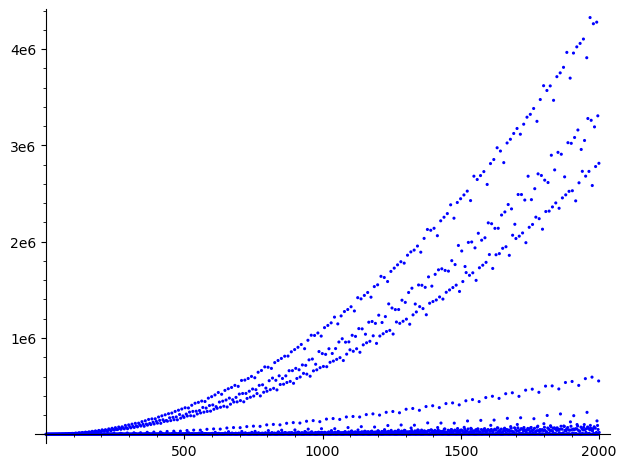}
\includegraphics[width=0.475\textwidth]{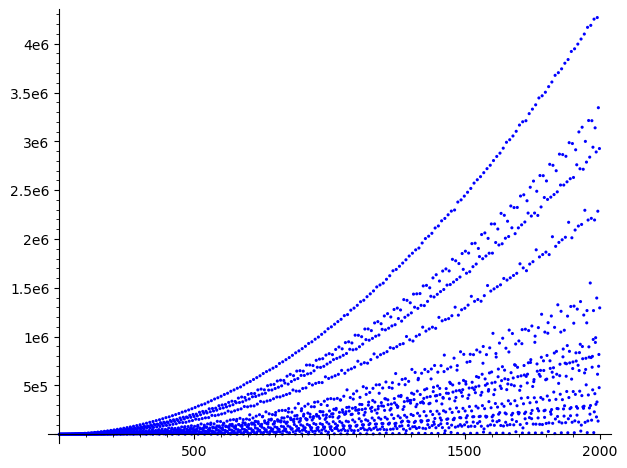} \\
\includegraphics[width=0.475\textwidth]{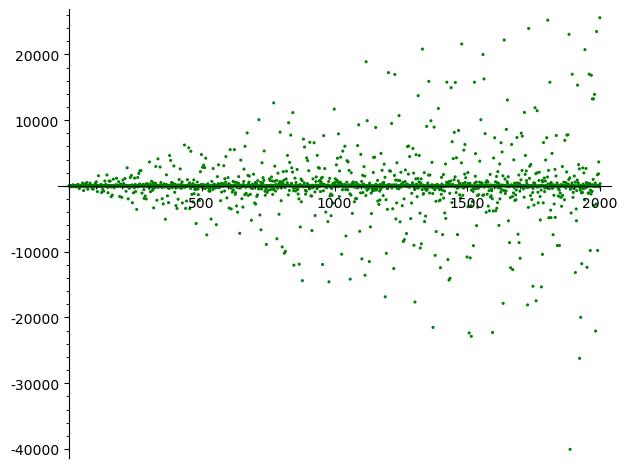}
\includegraphics[width=0.475\textwidth]{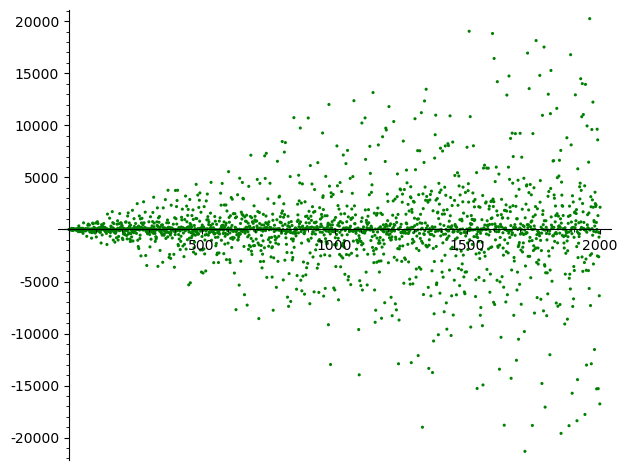}
\caption{
Contributions to $a_n$ (left) and $b_n$ (right) from Eisenstein series (top) and cuspforms (bottom) for $n \leq 2000$.
Both contributions to $a_n$ vanish when $n \equiv 1 \bmod 4$.
Otherwise, the Eisenstein part has different rates of quadratic growth depending on the factors of $n$, which we analyze in Section~\ref{sec:eisenstein}.
The cuspidal part stays within a linear envelope unless $n$ has many factors, by Deligne's bound Theorem~\ref{thm:cusp-bound}.
}
\label{fig:plots}
\end{figure}

The direct sum in Proposition~\ref{prop:mf} is a 44-dimensional space of modular forms, in which we find $g$ by solving a linear program that forces a finite number of the constraints $a_n \geq 0$ and $b_n \geq 0$. To verify the remaining inequalities as $n \rightarrow \infty$, we decompose $g$ as a sum of Eisenstein series and cuspforms. The Eisenstein contribution to $a_n$ or $b_n$ grows quadratically with $n$, whereas the cuspidal part has absolute value at most $n^{1+o(1)}$, from which one deduces the inequalities for large $n$. The different rates of growth are illustrated in Figure~\ref{fig:plots}.
To show the exact equalities $a_n = 0$ for $n \equiv 1 \bmod 4$, we must argue more carefully using the action of Hecke operators and quadratic twists. 
The fact that $a_n=0$ for $n \equiv 1 \bmod 4$ is visible in Figure~\ref{fig:spikes}, where, so to speak, every fourth spike seems to be missing. By increasing $N$ and using more characters instead of only $\chi_3$ and $\chi_4$, one could try to remove even more spikes and obtain a better match with the zeros of $f$.

There are other approaches to rigorously estimating the LP bound. Li \cite{L} discretizes Theorem~\ref{thm:cohn-elkies}, replacing $\R^D$ by a grid, and using duality for the resulting finite-dimensional linear programs rather than Theorem~\ref{thm:dual}. This proves that the Cohn--Elkies bound is not sharp in dimensions 3, 4, and 5, and gives bounds in all dimensions $3 \leq D \leq 13$ (ignoring the sharp case $D=8$). In dimension 6, the bound from \cite{L} is 0.07632412 in terms of center density, whereas one would need roughly 0.0794 to conclude from Theorem~\ref{thm:sdp} that the LP bound is not sharp. However, we imagine the only obstacle is computational power (and the current value would already suffice if one knew that $E_6$ is a densest packing, or if the SDP bound could be improved enough). 
Torquato and Stillinger \cite{TS}, motivated by hyperuniformity and the idea that disordered sphere packings in high dimensions could be much denser than known examples based on lattices, analyzed a choice of $\mu$ that gives the best result currently known asymptotically as $D \rightarrow \infty$. Their result, in terms of density rather than center density, shows that the LP bound is at least $2^{-\tau D + o(D)}$ where
\[
\tau = \frac{3 - 1/\log{2}}{2} \approx 0.7786
\]
and $o(D)$ denotes sublinear terms, meaning that $o(D)/D \rightarrow 0 $ as $D \rightarrow \infty$.
Numerical extrapolations from \cite{ACHLT} suggest the LP bound is $2^{-\lambda D + o(D) }$ where $\lambda \approx 0.6044$ and the authors speculate on a closed form for $\lambda$. However, the best proven asymptotic upper bound remains that of Kabatyanskii and Levenshtein \cite{KL}: the LP bound is at most $2^{-\kappa D + o(D) }$ where $\kappa \approx 0.59905576$ is the root of an explicit equation. The best known packings have density $2^{-D + o(D)}$. To our knowledge, there is no rigorous analysis of the SDP bound as $D \rightarrow \infty$.
That would be an important step in showing that there are only finitely many dimensions where the linear programming bound is sharp, and perhaps only 1, 2, 8, 24.

In Section~\ref{sec:ct}, we review the method of \cite{CT}, giving an alternative calculation of the main Fourier transform underlying the approach.
In Section~\ref{sec:space}, we describe the optimization space from Proposition~\ref{prop:mf} in more detail.
In Section~\ref{sec:choice}, we explain how to find the solution using exact arithmetic in the field $\Q(\sqrt{3})$.
In Sections~\ref{sec:cusp-total} to \ref{sec:eisenstein}, we estimate at what point the Eisenstein part dominates the cuspidal part, and explain the vanishing $a_n = 0$ for $n \equiv 1 \bmod 4$ in Section~\ref{sec:1mod4}.
Supplementary files available at \url{https://arxiv.org/abs/2211.09044} explain these calculations, and the final verification that $a_n, b_n \geq 0$, in more detail.
In Section~\ref{sec:d6-z6}, we prove an identity (\ref{eqn:theta-identity}) between theta series for the lattices $D_6$, $D_6^*$, and $\Z^6$. We had expected these three to be independent elements of the optimization space, and the linear relation countering that expectation is specifically 6-dimensional.

\begin{table}
\begin{tabular}{ccccc}
$D$ & Record density & SDP bound &  \multicolumn{2}{c}{LP bound}  \\ 
& & & Dual bound & Upper bound \\ \hline
1 & 1 & & & 1 \\
2 & 0.906899 & & & 0.906899? \\
\cellcolor{lightgray!40!white}3 & 0.740480 & 0.770271 & 0.770657 & 0.779747 \\
\cellcolor{lightgray!40!white}4 & 0.616850 & 0.636108 & 0.637303 & 0.647705 \\
\cellcolor{lightgray!40!white}5 & 0.465257 & 0.512646 & 0.517236 & 0.524981 \\
\cellcolor{lightgray!40!white}6 & 0.372947 & 0.410304 & {\bf 0.410948} & 0.417674 \\
7 & 0.295297 & 0.321148 & 0.301191 & 0.327456 \\
8 & 0.253669 &  &  & 0.253669 \\
9 & 0.145774 & 0.191121 & 0.164925 & 0.194556 \\
10 & 0.099615 & 0.143411 & 0.106256 & 0.147954 \\
11 & 0.066238 & 0.106726 & 0.078504 & 0.111691 \\
\cellcolor{lightgray!40!white}12 & 0.049454 & 0.079712 & 0.083381 & 0.083776 \\
13 & 0.032014 & 0.060165 & 0.032522 & 0.062482 \\
14 & 0.021624 & 0.045062 & & 0.046365 \\
15 & 0.016857 & 0.033757 & & 0.034249 \\
\cellcolor{lightgray!40!white}16 & 0.014708 & 0.023995 & 0.025011& 0.025195 \\
24 & 0.001929 & & & 0.001929
\end{tabular}
\caption{The LP bound exceeds the SDP bound in dimensions $D=3, 4, 5, 6, 12, 16$, shown in grey. The LP bound is sharp for $D=1,8,24$ and conjecturally $D=2$. Values of the Cohn--de Laat--Salmon SDP bound are taken from \cite[Table 1.1]{CdLS}, as are the highest known densities and the numerical LP bounds computed in \cite{ACHLT}. Li's dual bound on the LP bound from \cite{L} for $D \leq 13$, and the values for $D=12, 16$ from Cohn--Triantafillou \cite[Table 6.1]{CT}, have been converted from center density to density.
The value from Theorem~\ref{thm:main} is shown in bold.}
\label{table:sitch}
\end{table}

\section{The method of Cohn--Triantafillou} \label{sec:ct}

The key Fourier calculation behind our proof is given in the following proposition. Cohn and Triantafillou proved this in \cite[Proposition 2.2]{CT} assuming that $k$ is an integer (i.e. the dimension is even), and that $g$ is modular for a congruence group $\Gamma_1(N)$. We give another proof without those assumptions: it is enough for $g$ to be modular for a smaller group given by generators in (\ref{eqn:both-periodic}), where $N > 0$ could take non-integer values. For $k$ a half-integer and $\Im(z) > 0$, define $(-iz)^{-k}$ by the standard branch satisfying $1^{-k}=1$.
Write $\mathbb{H}$ for the half-plane $\{ z \in \C \ ; \ \Im(z) > 0 \}$.
\begin{proposition} \label{prop:fourier}
For a function $g: \mathbb{H} \rightarrow \C$, $k = D/2$ an integer or half-integer, and $N >0$, let
\begin{equation} \label{eqn:transform}
\widetilde{g}(z) = (-iz)^{-k} N^{-k/2} g\left( \frac{-1}{Nz} \right).
\end{equation}
Suppose $g$ and $\widetilde{g}$ are both periodic, with Fourier expansions
\[
g(z) = \sum_{n=0}^{\infty} a_n e^{2\pi i nz} \qquad \text{and} \qquad \widetilde{g}(z) = \sum_{n=0}^{\infty} b_n e^{2\pi i n z}.
\]
Then
\[
\sum_{n=0}^{\infty} a_n \delta_{\sqrt{n}} \qquad \text{and} \qquad (2/\sqrt{N})^k \sum_{n=0}^{\infty} b_n \delta_{2\sqrt{n/N}}
\]
are Fourier transforms of each other as tempered distributions on $\R^D$, where $\delta_r$ denotes a spherical delta at radius $r$.
\end{proposition}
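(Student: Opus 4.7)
The plan is to pair both distributions against the radial Gaussians $\varphi_s(x) = e^{-\pi s|x|^2}$ for $s > 0$, verify the identity on this family, and conclude by density in the radial Schwartz space. Under the standard convention that $\delta_r$ denotes the uniform probability measure on the sphere of radius $r$, a spherical delta integrates a radial function to its value at $r$. With the polynomial growth of $a_n$ and $b_n$ that one has for modular forms (the setting of the application), the formal sums
\[
\mu := \sum_{n \geq 0} a_n \delta_{\sqrt{n}}, \qquad \mu^{\ast} := (2/\sqrt{N})^{k} \sum_{n \geq 0} b_n \delta_{2\sqrt{n/N}}
\]
define radial tempered distributions on $\R^D$.

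The core computation is then
\[
\langle \mu, \varphi_s \rangle = \sum_n a_n e^{-\pi s n} = g(is/2), \qquad \langle \mu^{\ast}, \varphi_s \rangle = (2/\sqrt{N})^k \widetilde{g}(2is/N),
\]
while $\widehat{\varphi_s}(\xi) = s^{-D/2}\varphi_{1/s}(\xi)$ yields
\[
\langle \widehat{\mu}, \varphi_s \rangle = \langle \mu, \widehat{\varphi_s} \rangle = s^{-D/2} g(i/(2s)).
\]
Applying the transformation (\ref{eqn:transform}) at $z = 2is/N$, so that $-iz = 2s/N > 0$ and $-1/(Nz) = i/(2s)$, one obtains $\widetilde{g}(2is/N) = (2s/N)^{-k} N^{-k/2} g(i/(2s))$; the factors of $2$ and $N$ cancel against the prefactor $(2/\sqrt{N})^k$ to leave exactly $s^{-k} = s^{-D/2}$. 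Thus $\langle \mu^{\ast}, \varphi_s \rangle = \langle \widehat{\mu}, \varphi_s \rangle$ for every $s > 0$. This modular cancellation is really the heart of the proof.

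Finally, I would promote this agreement on Gaussians to equality as tempered distributions. Both $\widehat{\mu}$ and $\mu^{\ast}$ are radial — directly for $\mu^{\ast}$, and by commutativity of the Fourier transform with rotations for $\widehat{\mu}$ — so they are determined by their values on the radial Schwartz space $\mathcal{S}_{\mathrm{rad}}(\R^D)$. Substituting $t = |x|^2$ identifies this with Schwartz functions on $[0,\infty)$, and the linear span of $\{e^{-\pi s t} : s > 0\}$ is dense in that space by injectivity of the Laplace transform on tempered distributions of $t \geq 0$. I expect this density step to be the only mildly technical point; once granted, $\widehat{\mu} = \mu^{\ast}$ follows and the proposition is proved.
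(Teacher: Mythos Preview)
Your proof is correct and follows essentially the same approach as the paper: both test the identity against Gaussians and invoke their density in the radial Schwartz space. The paper parametrizes the Gaussians by a complex $z$ with $\Im(z)>0$ (writing $f(x)=e^{\pi i|x|^2 z}$), whereas you restrict to the imaginary axis $z=is$; the algebra is the same, and your density step via Laplace-transform injectivity is exactly the ``by approximation'' the paper leaves implicit.
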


\begin{proof}
By definition, the claim is that
\[
\sum_{n=0}^{\infty} a_n f(\sqrt{n}) = (2/\sqrt{N})^k \sum_{n=0}^{\infty} b_n \widehat{f}\left( 2\sqrt{\frac{n}{N} } \right)
\]
for all radial Schwartz functions $f$. 
By approximation, it is enough to check this for Gaussians $f(x) = e^{\pi i  |x|^2 z}$ with $\Im(z)>0$.
The Fourier transform is then
\[
\widehat{f}(\xi) = (i/z)^{k} e^{\pi i |\xi|^2 (-1/z) }
\]
On the left,
\[
\sum_n a_n f(\sqrt{n}) = \sum_n a_n e^{\pi i n z} = g(z/2).
\]
On the right,
\[
\sum_n b_n \widehat{f}(2 \sqrt{n/N}) = \sum_n b_n e^{-4\pi i n /(Nz) }  (i/z)^{k} = \widetilde{g}\left( \frac{-1}{Nz/2} \right)  (i/z)^{k}
\]
Note that
\[
\widetilde{g}\left(\frac{-1}{Nz/2} \right) = \left(-i \left( \frac{-1}{Nz/2} \right) \right)^{-k} N^{-k/2} g(z/2) =2^{-k} N^{k/2} (i/z)^{-k} g(z/2)
\]
So, as required, $(\text{right})=2^{-k} N^{k/2} (\text{left})$.
\end{proof}

The periodicity of both $g$ and $\widetilde{g}$ requires $g$ to transform as a modular form, and $\widetilde{g}$ is an example of an Atkin--Lehner involution applied to $g$.
Let
\begin{equation} \label{eqn:win}
w_N = \begin{pmatrix} 0 & -1 \\ N & 0 \end{pmatrix}, \qquad w_N(z) = \frac{-1}{Nz}
\end{equation}
Then, for integer $k$,
\begin{equation} \label{eqn:transform-slash}
\widetilde{g} = (-i)^{-k} g| w_N
\end{equation}
where the \emph{slash action} is given by
\begin{equation} \label{eqn:slash}
g \big\rvert \begin{pmatrix} a & b \\ c & d \end{pmatrix} (z) = (ad-bc)^{k/2} (cz+d)^{-k}  g\left( \frac{az+b}{cz+d} \right) 
\end{equation}
and satisfies a chain rule $g \big\rvert \gamma_1 \gamma_2 = (g \big\rvert \gamma_1) \big\rvert \gamma_2$.
The factor $(-i)^{-k}$, or simply $i^k$, is the correct sign so that both sequences $a_n, b_n$ can be non-negative, as one can see by taking $z=it$ on the imaginary axis in (\ref{eqn:transform}).
The periodicity of $\widetilde{g}$ amounts to $g | w_N T = g|w_N$, where $T(z) = z+1$ is the standard translation.
Equivalently, $g | w_N T w_N^{-1} = g$.
Thus the periodicity of $g$ and $\widetilde{g}$ requires two modular properties for $g$:
\begin{equation} \label{eqn:both-periodic}
g, \widetilde{g} \ \text{ both periodic} \Longleftrightarrow g = g|T = g|w_N T w_N^{-1}
\end{equation}
In matrix form, the condition is that $g$ must be modular (of weight $k=D/2$) with respect to the group generated by $\begin{pmatrix} 1 & 1 \\ 0 & 1 \end{pmatrix}$ and $\begin{pmatrix} 1 & 0 \\ N & 1 \end{pmatrix}$. 

If $N > 4$, then there is an infinite-dimensional space of candidates $g$ transforming as per (\ref{eqn:both-periodic}), since a scaling $z \mapsto \sqrt{N} z$ conjugates these generators into a Hecke triangle group \cite[Chapter 4]{BK}.
However, these are not as explicit as modular forms on $\Gamma_1(N)$.
Following \cite{CT}, we will assume $N$ is an integer and impose modularity under a larger group to obtain a finite-dimensional space that is amenable to computation.

The action of $w_N$ by conjugation is
\begin{equation} \label{eqn:w-conjugation}
w_N \begin{pmatrix} a & b \\ c & d \end{pmatrix} w_N^{-1} = \begin{pmatrix} d & -c/N \\ -Nb & a \end{pmatrix}
\end{equation}
In particular, $w_N$ normalizes the congruence subgroups $\Gamma_0(N)$ and $\Gamma_1(N)$. Recall that these are the subgroups of $\SL(2,\Z)$ defined by
\[
 \Gamma_0(N) = \begin{pmatrix} * & * \\ 0 & * \end{pmatrix} \bmod N,  \quad \Gamma_1(N) = \begin{pmatrix} 1 & * \\ 0 & 1 \end{pmatrix} \bmod N
\]
which are clearly normalized by $w_N$, in view of (\ref{eqn:w-conjugation}).
It follows that if $g$ is a modular form for $\Gamma_1(N)$, then both $g, \widetilde{g}$ are periodic.
That was the case considered in \cite{CT}. It is convenient for computational purposes because of existing software developed in GP/Pari \cite{pari}, Sage \cite{sage}, and Magma \cite{magma}. 

The space of modular forms for $\Gamma_1(N)$ decomposes into spaces for $\Gamma_0(N)$ with various Dirichlet characters modulo $N$ (see, for instance, \cite[p. 137]{K}). 
Recall that, by definition, modular forms with character $\chi$ are holomorphic functions with a growth condition and satisfying the functional equations
\[
f\left( \frac{az+b}{cz+d} \right) = \chi(d) (cz+d)^k f(z)
\]
whenever $a,b,c, d$ are integers obeying $ad-bc=1$ and $c$ is divisible by $N$, that is, $\begin{pmatrix} a & b \\ c & d \end{pmatrix} \in \Gamma_0(N)$. 
Such $f$ form a vector space over $\C$ which we denote by $M_k(\Gamma_0(N),\chi)$. Calculations with this space have been implemented in Pari using the command \texttt{mfinit} which takes as input $k$, $N$, and various possible descriptions of $\chi$. 

Cohn and Triantafillou stated their results in terms of $\Gamma_1(N)$, but used only the trivial character, which was already enough to obtain interesting bounds with $k=6$ or $k=8$ for the dimensions they considered.
In case $k$ is odd, we must use $\Gamma_1(N)$, rather than $\Gamma_0(N)$ with trivial character, and decompose with respect to characters modulo $N$ satisfying $\chi(-1)=-1$. One must have $\chi(-1)=(-1)^k$ for there to be any non-zero modular forms of character $\chi$ on $\Gamma_0(N)$, in view of the action of $-I: z \mapsto (-z)/(-1)=z$. 

Taking the measure $\mu$ in Theorem~\ref{thm:dual} to be a sum of point masses as in Proposition~\ref{prop:fourier} led Cohn and Triantafillou to the following, which can be optimized over a finite-dimensional space of modular forms.
\begin{proposition}[Cohn--Triantafillou] \label{prop:bound}
Suppose $g$ is a modular form of weight $D/2$ for $\Gamma_1(N)$ and the Fourier coefficients of $g = \sum_n a_n q^n $ and $\widetilde{g} = \sum_n b_n q^n$ obey the following inequalities, where $q=e^{2\pi i z}$:
\begin{align*}
a_n &\geq 0, b_n \geq 0 \\
a_0 &=1, b_0 > 0 \\
a_n &= 0 \qquad \text{for} \qquad 1 \leq n < T
\end{align*}
Then the linear programming bound in $\R^{D}$ is at least
\begin{equation} \label{eqn:bound}
b_0 \left( \frac{2}{\sqrt{N} } \right)^{D/2} \left( \frac{\sqrt{T}}{2} \right)^{D}
\end{equation}
\end{proposition}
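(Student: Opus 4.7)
My proof plan is to take the distribution from Proposition~\ref{prop:fourier} and feed it into the dual bound of Theorem~\ref{thm:dual}. Concretely, I would set
\[
\mu = \sum_{n=0}^{\infty} a_n \delta_{\sqrt{n}}
\]
as a tempered distribution on $\R^D$. This is well-defined because the Fourier coefficients of a holomorphic modular form of weight $D/2$ grow at most polynomially in $n$, so the sum pairs against any Schwartz function.

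The first step is to verify the hypotheses of Proposition~\ref{prop:fourier}, namely that both $g$ and $\widetilde{g}$ are periodic. Periodicity of $g$ is automatic from $T \in \Gamma_1(N)$. For $\widetilde{g}$, I use that $w_N$ normalizes $\Gamma_1(N)$ as recorded in (\ref{eqn:w-conjugation}): by (\ref{eqn:transform-slash}), $\widetilde{g} = (-i)^{-k} g \vert w_N$ is again modular for $\Gamma_1(N)$, in particular $T$-invariant. Proposition~\ref{prop:fourier} then identifies
\[
\widehat{\mu} = \left( \frac{2}{\sqrt{N}} \right)^{D/2} \sum_{n=0}^{\infty} b_n \delta_{2\sqrt{n/N}}.
\]

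The second step is to check the hypotheses of Theorem~\ref{thm:dual} with $r = \sqrt{T}$ and $c = b_0 (2/\sqrt{N})^{D/2}$. Since $a_0 = 1$ and $a_n = 0$ for $1 \leq n < T$, I can write $\mu = \delta_0 + \nu$ with $\nu = \sum_{n \geq T} a_n \delta_{\sqrt{n}}$; the assumption $a_n \geq 0$ makes $\nu$ a non-negative measure supported in $\{|x| \geq \sqrt{T}\}$. On the Fourier side, isolating the $n=0$ term in $\widehat{\mu}$ gives
\[
\widehat{\mu} - c\delta_0 = \left( \frac{2}{\sqrt{N}} \right)^{D/2} \sum_{n \geq 1} b_n \delta_{2\sqrt{n/N}},
\]
which is non-negative by $b_n \geq 0$. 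Theorem~\ref{thm:dual} then yields the LP lower bound
\[
c \left( \frac{r}{2} \right)^D = b_0 \left( \frac{2}{\sqrt{N}} \right)^{D/2} \left( \frac{\sqrt{T}}{2} \right)^{D},
\]
as required.

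There is no genuine obstacle at this stage: the proposition is a formal consequence of Theorem~\ref{thm:dual} and Proposition~\ref{prop:fourier}, with only the routine tempered-distribution convergence and the $w_N$-normalization of $\Gamma_1(N)$ to check. The real difficulty in the paper lies elsewhere, namely in exhibiting a modular form $g$ whose Fourier coefficients actually satisfy the non-negativity, vanishing, and positivity-at-zero conditions imposed here. That is the content of Proposition~\ref{prop:mf}, whose proof requires separating the Eisenstein and cuspidal contributions and bounding the cuspidal part via Deligne's estimate, as outlined in the later sections of the paper.
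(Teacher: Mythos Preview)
Your proposal is correct and matches the paper's own approach: the paper introduces Proposition~\ref{prop:bound} precisely as what one gets by ``taking the measure $\mu$ in Theorem~\ref{thm:dual} to be a sum of point masses as in Proposition~\ref{prop:fourier},'' without spelling out further details. Your verification of the periodicity of $\widetilde{g}$ via the $w_N$-normalization of $\Gamma_1(N)$ and your identification of $r=\sqrt{T}$ and $c=b_0(2/\sqrt{N})^{D/2}$ are exactly the intended steps.
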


This is a linear program with a finite number of variables, as many as the dimension of the space of modular forms of weight $D/2$ for $\Gamma_1(N)$. 
However, it involves infinitely many constraints $a_n \geq 0$, $b_n \geq 0$. 
In practice, one solves a linear program enforcing only a finite number of these. One must then separate the solution $g$ into a cuspidal part and an Eisenstein part. The remaining inequalities for large $n$ are established by showing that the Eisenstein part is eventually positive, and large enough that the cuspidal part can be neglected.

The cuspidal part can be bounded using Deligne's theorem on the Ramanujan conjecture for holomorphic forms. For a statement without proof, one can refer to \cite[eq. (14.54)]{IK}. The latter follows from Deligne's work on the Riemann Hypothesis for varieties over finite fields \cite[Th\'eor\`eme 8.2]{D}.
See also \cite{D-bourbaki} and the references therein to earlier work \cite{Ihara, KS} linking the Fourier coefficients and Hecke eigenvalues to the number of points on associated varieties.
\begin{theorem}[Deligne] \label{thm:cusp-bound}
If $k \geq 2$ is an integer, and $f$ is a Hecke cuspform of weight $k$ for $\Gamma_0(N)$ with character $\chi$, normalized so that 
\[
f(z) = \sum_{n>0} c_f(n) e(nz) = 1 \cdot e(n_{{\rm min}} z) + \ldots \hspace{1cm} (e(t) = e^{2\pi i t})
\]
with leading coefficient $1$, then for all $n$,
\[
|c_f(n)| \leq n^{(k-1)/2} \sigma_0(n)
\]
where $\sigma_0(n)$ is the number of positive divisors of $n$.
\end{theorem}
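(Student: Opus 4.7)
The plan is to reduce to the case of a Hecke eigenform whose Fourier coefficients are multiplicative, invoke Deligne's theorem \cite{D} on the Ramanujan--Petersson conjecture as a black box to bound the coefficients at prime powers, and then combine across the prime factorization of $n$.

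First I would normalize so that $c_f(n_{\min}) = 1$, at which point a simultaneous Hecke eigenform has multiplicative Fourier coefficients and the Hecke recursion collapses into the local generating identity
\[
\sum_{m \geq 0} c_f(p^m) X^m \;=\; \frac{1}{1 - \lambda_p X + \chi(p) p^{k-1} X^2} \;=\; \frac{1}{(1-\alpha_p X)(1-\beta_p X)},
\]
where $\lambda_p = \alpha_p + \beta_p$ is the $T_p$-eigenvalue and $\alpha_p \beta_p = \chi(p) p^{k-1}$, with the convention $\chi(p)=0$ for $p \mid N$. Expanding the right-hand side gives
\[
c_f(p^m) \;=\; \sum_{j=0}^{m} \alpha_p^{\,j}\beta_p^{\,m-j}.
\]

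The deep input is Deligne's theorem that for $p \nmid N$ both roots $\alpha_p, \beta_p$ have absolute value exactly $p^{(k-1)/2}$, equivalently $|\lambda_p| \leq 2 p^{(k-1)/2}$. Proving this is what requires the Weil conjectures: one realizes $\lambda_p$ as the trace of Frobenius on a piece of the $\ell$-adic cohomology of an appropriate Kuga--Sato variety fibred over the modular curve of level $N$, and applies the purity bound from \cite{D} to each Frobenius eigenvalue. For $p \mid N$ the analogous estimate $|\alpha_p| \leq p^{(k-1)/2}$ (with $\beta_p = 0$) follows from Atkin--Lehner theory together with the local representation-theoretic analysis of newforms. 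Either way, each of the $m+1$ summands in the formula for $c_f(p^m)$ has modulus at most $p^{m(k-1)/2}$, so that $|c_f(p^m)| \leq (m+1)\, p^{m(k-1)/2}$.

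Multiplying over the prime factorization $n = \prod_i p_i^{m_i}$, and using $\sigma_0(n) = \prod_i (m_i+1)$, yields
\[
|c_f(n)| \;\leq\; \prod_i (m_i+1)\, p_i^{m_i(k-1)/2} \;=\; \sigma_0(n)\, n^{(k-1)/2},
\]
as claimed. The main obstacle is of course the Ramanujan--Petersson step, which is genuinely Deligne's theorem and is not something one can honestly sketch in a paragraph; the elementary arguments above merely repackage his bound into the stated estimate on $|c_f(n)|$, which is why the theorem is invoked here as a black box with a pointer to \cite{IK}.
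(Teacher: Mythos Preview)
The paper does not give a proof of this theorem at all: it is quoted as a black box, with a reference to \cite[eq.~(14.54)]{IK} for the statement and to \cite{D,D-bourbaki,Ihara,KS} for the underlying input from \'etale cohomology. Your sketch is the standard reduction and is correct in spirit, so in that sense you have supplied more than the paper does.

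One small caveat worth tightening: the assertion that, after normalizing $c_f(n_{\min})=1$, a simultaneous Hecke eigenform has multiplicative Fourier coefficients with the Euler product you write down is literally true only when $n_{\min}=1$, i.e.\ when $f$ is (the scalar multiple of) a newform. An arbitrary Hecke eigenform in $S_k(\Gamma_0(N),\chi)$ may be an oldform $g(dz)$ with $g$ new of lower level, in which case $c_f(n)=c_g(n/d)$ for $d\mid n$ and $0$ otherwise; the bound $|c_f(n)|\leq n^{(k-1)/2}\sigma_0(n)$ then follows immediately from the newform case since $(n/d)^{(k-1)/2}\sigma_0(n/d)\leq n^{(k-1)/2}\sigma_0(n)$. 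This is a routine patch, but as written your first step glosses over it.
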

The corresponding result for $k=1$ is also true, by a theorem of Deligne and Serre \cite{DS}, but that case corresponds to dimension 2, where the LP bound is expected to be sharp, and where both exponents $k-1$ and $(k-1)/2$ equal 0. For us, $k=3$.

The divisor function $\sigma_0(n)$ grows slower than any power of $n$, but can exceed any fixed power of $\log{n}$. At its largest, it behaves roughly like 
\[
2^{ \log{n} \ \div \ \log\log{n} }
\]
as discussed for instance in \cite[Theorem 317]{HW}. We quote a sharp version of this from \cite{NR}.
\begin{theorem}[Nicolas, Robin] \label{thm:max-divisors}
The maximum
\[
\max_{n \geq 2} \quad \log{\sigma_0(n) } \frac{ \log\log{n} }{(\log{2})( \log{n}) }
\]
is attained at
\[
n=2^5 \cdot 3^3 \cdot 5^2 \cdot 7 \cdot 11 \cdot 13 \cdot 17 \cdot 19 = 6 \ 983 \ 776 \ 800
\]
with value $1.5379 \ldots$
\end{theorem}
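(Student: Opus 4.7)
The plan is to reduce to a tractable finite class of candidates and then combine direct computation with an effective tail bound on the maximal order of $\sigma_0(n)$.

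Write $n = \prod_{i} p_i^{a_i}$ with $p_i$ the $i$-th prime. Then $\sigma_0(n) = \prod_i(a_i+1)$ and $\log n = \sum_i a_i \log p_i$. A rearrangement argument shows that sorting the nonzero exponents so that $a_1 \geq a_2 \geq \cdots$ preserves $\sigma_0(n)$ while strictly decreasing $\log n$. Since $t \mapsto \log t / t$ is decreasing for $t > e$, this strictly increases
\[
F(n) := \frac{\log\sigma_0(n)}{\log 2} \cdot \frac{\log\log n}{\log n}
\]
once $n > e^e$, so the maximum is attained at some $n = \prod_{i=1}^k p_i^{a_i}$ with $a_1 \geq a_2 \geq \cdots \geq a_k \geq 1$. (The tiny range $n \leq e^e$ is handled by direct inspection.)

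Next I would restrict to Ramanujan's \emph{superior highly composite numbers} (SHCN): those $n$ maximizing $\sigma_0(n)/n^\varepsilon$ for some $\varepsilon>0$. A first-order analysis at the maximum forces the exponent of $p$ to be $a = \lfloor 1/(p^\varepsilon-1) \rfloor$, so the SHCNs form a discrete sequence parameterized by critical values of $\varepsilon$, and the target $n = 2^5 \cdot 3^3 \cdot 5^2 \cdot 7 \cdot 11 \cdot 13 \cdot 17 \cdot 19$ is one of them. By definition, for any $n$ and any $\varepsilon>0$,
\[
\log\sigma_0(n) \leq \log\sigma_0(N_\varepsilon) + \varepsilon \bigl( \log n - \log N_\varepsilon \bigr),
\]
so optimizing $\varepsilon$ as a function of $\log n$ bounds $\sigma_0(n)$ pointwise by the SHCN envelope and reduces the maximization of $F$ to the SHCN subsequence.

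At this stage the proof splits into a finite computation and an asymptotic tail. I would enumerate the SHCNs with $\log N_\varepsilon$ up to an explicit cutoff $L_0$ (only on the order of $L_0/\log L_0$ of these exist), evaluate $F$ at each, and verify directly that the maximum in this range is attained at the stated $n$ with value $1.5379\ldots$. For $\log N_\varepsilon > L_0$, I would substitute the explicit parameterization
\[
\log N_\varepsilon = \sum_{p} \lfloor 1/(p^\varepsilon-1) \rfloor \log p, \qquad \log\sigma_0(N_\varepsilon) = \sum_p \log\bigl(1 + \lfloor 1/(p^\varepsilon-1) \rfloor\bigr)
\]
into $F$ and apply Rosser--Schoenfeld-type explicit bounds for $\pi(x)$ and $\theta(x) = \sum_{p \leq x} \log p$ to prove $F(N_\varepsilon) < 1.5379$ uniformly in the tail.

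The main obstacle is the effectiveness of the tail bound. Wigert's asymptotic $\log\sigma_0(n) \sim (\log 2)\log n / \log\log n$ must be made explicit with constants sharp enough that the threshold $L_0$ below which one must compute is small enough to be computationally feasible, while large enough that the explicit prime number estimates take over cleanly above it. Striking this balance, via careful propagation of effective prime number theorem bounds through the SHCN parameterization, is essentially the content of Nicolas and Robin's argument.
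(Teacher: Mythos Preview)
The paper does not prove this statement at all; it is quoted from \cite{NR} (Nicolas and Robin) and used as a black box in the explicit inequalities of Section~2. There is therefore no proof in the paper to compare your proposal against.

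That said, your sketch is a reasonable outline of the strategy one actually finds in \cite{NR}: reduction to numbers with decreasing exponents on consecutive primes, passage to the superior highly composite envelope, and then a split into a finite search plus an effective tail estimate coming from explicit Chebyshev-type bounds. If the assignment was to supply the missing proof, you have identified the right structure, but what you have written is a plan rather than a proof: the actual work lies entirely in the part you describe as ``striking this balance,'' namely making the Rosser--Schoenfeld constants explicit enough that the cutoff $L_0$ is computationally feasible and the tail bound honestly falls below $1.5379$. None of those constants or inequalities appear in your write-up, so as it stands the proposal establishes nothing quantitatively.
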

This number 1.5379 is larger than the limiting value 1 that one could take as $n \rightarrow \infty$, but in our application, it is helpful if one can show that the Eisenstein part dominates the cuspidal part even for relatively small values of $n$.

Write $a_n$ as the sum of the contributions from Eisenstein series and from cuspforms: 
\begin{equation} \label{eqn:eis+cusp}
a_n = a_{n,{\rm eis}} + a_{n,{\rm cusp}}
\end{equation}
and similarly for $b_n$. 
It can happen that $a_n$ vanishes along an arithmetic progression (for instance, we will have $a_n=0$ for all $n \equiv 1 \bmod 4$). For the remaining $n$, we will show $a_n \geq 0$ by the following inequalities.
Using explicit formulas for the Eisenstein series, one can show a bound of the form
\begin{equation} \label{eqn:eps-def}
a_{n,{\rm eis}} \geq \varepsilon n^{k-1}
\end{equation}
whereas Deligne's theorem implies
\begin{equation} \label{eqn:C-def}
|a_{n,{\rm cusp}}|  \leq C n^{(k-1)/2} \sigma_0(n)
\end{equation}
where $\varepsilon > 0$ and $C > 0$ are constants (independent of $n$) determined by the choice of variables $x_j$ in Proposition~\ref{prop:bound}. 
These bounds entail
\[
a_n \geq a_{n,{\rm eis}} - |a_{n,{\rm cusp}}| \geq \varepsilon n^{k-1} - C n^{(k-1)/2} \sigma_0(n).
\]
This will imply $a_n \geq 0$ once $n$ is large enough that
\begin{equation} \label{eqn:divisors-versus-n}
\sigma_0(n) \leq \frac{\varepsilon}{C} n^{(k-1)/2} 
\end{equation}
After logarithms, the comparison (\ref{eqn:divisors-versus-n}) becomes
\begin{equation} \label{eqn:divisors-versus-logs}
\log \sigma_0(n) + \log{\frac{C}{\varepsilon}} \leq \frac{k-1}{2} \log{n}
\end{equation}
Suppose 
\begin{equation} \label{eqn:divisors-exprate}
 \log{\sigma_0(n)} \leq R \frac{\log{n}}{\log\log{n}}
\end{equation}
where one can take $R = 1.5379 \log{2}$ as in Theorem~\ref{thm:max-divisors}, or take $R$ arbitrarily close to $\log{2}$ by assuming $n$ is large enough.
The criterion (\ref{eqn:divisors-versus-logs}) is satisfied provided that
\begin{equation} \label{eqn:satisfied}
\frac{R}{\log\log{n}} + \frac{\log{(C/\varepsilon)}}{\log{n}} \leq \frac{k-1}{2}
\end{equation}
which holds for all large enough $n$. Explicitly, if
\[
n \geq \max\left( \exp \left( \frac{4 \log(C/\varepsilon)}{k-1} \right), \exp\exp \left(\frac{4R}{k-1} \right) \right)
\]
then (\ref{eqn:satisfied}) is satisfied because each term on the left is at most half the right. 
For instance, it is enough to take $n \geq \max( (C/\varepsilon)^2, 4590)$ when $k=3$ and $R$ is taken from Theorem~\ref{thm:max-divisors}. 

In practice, one can do better by solving (\ref{eqn:satisfied}) numerically. The estimates from Sections~\ref{sec:cusp-total} and \ref{sec:eisenstein} will show that $C=21.6161$ and $\varepsilon = 8.7536 \times 10^{-6}$ are admissible for $a_n$, while $C=24.0265$ and $\varepsilon = 0.001358$ are admissible for $b_n$. 
It follows that $a_n \geq 0$ for $n \geq 5347177639 \approx 5 \times 10^9$, while $b_n \geq 0$ for $n \geq 8126856 \approx 8 \times 10^6$. 

The verification that $a_n, b_n \geq 0$ for the remaining values of $n$ can be done on a personal computer running Pari. The calculation takes a matter of hours for $b_n$, or a matter of days for $a_n$. 
It seemed too memory-intensive to compute all the values of $a_{n,{\rm cusp}}$ for $n \leq n_0 =5347177638$, whereas $a_{n,{\rm eis}}$ and $\sigma_0(n)$ are given by elementary formulas in terms of the factors of $n$.
We first compute $a_{n,{\rm eis}}$ for all $n \leq n_0$, and check whether $a_{n,{\rm eis}} - C n \sigma_0(n) \geq 0$, in which case positivity follows from Theorem~\ref{thm:cusp-bound}. If not, we add $n$ to a list of indices for which we will compute $a_{n,{\rm cusp}}$. 
In the more difficult case of $a_n$, this list contains 67250 values, culminating in $n=261777516$.
For $b_n$, we only needed 4329 values of $b_{n,{\rm cusp}}$, the last being $n=556738$. 
The values can be found in the supplementary files \texttt{checklist-an-67250.txt} and \texttt{checklist-bn-4329.txt} at \url{https://arxiv.org/abs/2211.09044}.
It took a week to make the list for $a_n$, and two days to confirm that $a_{n,{\rm eis}}+a_{n,{\rm cusp}} \geq 0$ for each of these $n$. No doubt some improvements could be made to our implementation, and in particular it would have been possible to check different ranges in parallel.

The dependence on $k$ in (\ref{eqn:satisfied}) was a challenge making our six-dimensional example more difficult in some ways than the higher-dimensional ones from \cite{CT}. When $k$ is large, an Eisenstein part of order $n^{k-1}$ will very quickly dominate a cuspidal part of order $n^{(k-1)/2}$ (although of course there could be difficulties for any given $k$ if the ratio $C/\varepsilon$ is too large).
When $k$ is only 3, we encountered several examples where $n$ must be quite large before the asymptotics make themselves felt. For instance, with $T=7$ and $N=48$ in Proposition~\ref{prop:bound}, forcing the inequalities for $n \leq 2000$ might have seemed more than sufficient, but it failed to make $a_n,b_n \geq 0$ for larger $n$. Choosing $x_j'$ according to that smaller linear program, instead of the solution $x_j$ used here, leads to an infeasible candidate where some of the corresponding coefficients $a_n',b_n'$ are negative. Indeed, in that case, $b_{2890}' < 0$. 
For various pairs $(N,T)$, some of our candidates seemed valid for tens of thousands of coefficients, only to fail at large values of $n$ with a special factorization for which the Eisenstein contribution was negative.
The role of the factors is partly explained by Proposition~\ref{prop:ratio-4-3}.

\section{Space of modular forms} \label{sec:space}

In this section, we describe the space of candidates $g$ over which we optimize, and indicate how it relates to the presumed best packing $E_6$. We will take $g = \sum_j x_j f_j$ to be a linear combination of 44 modular forms $f_1, \ldots, f_{44}$. The quantity $b_0$, which is to be maximized as in Proposition~\ref{prop:bound}, will take the shape
\begin{align}
b_0 = &\frac{64}{27}x_1 + \frac{8}{27}x_2 + \frac{1}{27} x_3 + \frac{1}{216}x_{4} + \frac{1}{1728}x_5 + \label{b0} \\
&\frac{1}{\sqrt{12}} \left( 9x_{23} + \frac{9}{8}x_{24} + \frac{1}{3} x_{25} + \frac{9}{64}x_{26} + \frac{1}{24}x_{27} + \frac{1}{192}x_{28}\right) \nonumber
\end{align}
The optimal coefficients $x_j$ lie in $\Q(\sqrt{3})$ because of the factor $\sqrt{12}$ arising here. Table~\ref{table:numbers} shows their values truncated to 4 decimal places for readability, and we will discuss how to find them exactly in Section~\ref{sec:choice}. In this section, we describe the basis functions $f_j$ and their transforms.

We follow the approach from Proposition~\ref{prop:bound}, taking 
\begin{equation} \label{eqn:choice-of-N}
N=48 = 2^4 \cdot 3
\end{equation}
The weight relevant to 6-dimensional packing is $k=D/2=3$. 

We used only two characters $\chi_3$ and $\chi_4$, induced from the characters mod 3 and mod 4 given by $-1 \bmod 3 \mapsto -1$ and $-1 \bmod 4 \mapsto -1$. In principle, one could do even better using more characters, but these were already enough to prove Theorem~\ref{thm:main}. We choose $T=7$ in Proposition~\ref{prop:bound}, that is, we seek a solution of the form
\[
g(z) = 1 + a_7 q^7 + \ldots \quad (q = e^{2\pi i z} )
\]
in the direct sum
\[
M_3(\Gamma_0(48), \chi_3) \oplus M_3(\Gamma_0(48), \chi_4).
\]

The summand for $\chi_3$ contains the theta series of the lattice $E_6$, which is presumed to give a sphere packing of maximal density. 
In terms of the basis $f_1, \ldots, f_{44}$ that we will use, this theta series is expressed as
\begin{equation} \label{eqn:e6}
\vartheta_{E_6}(z) = 81 f_1(z) - 9f_6(z) = 1 + 72q + 270q^2 + 936q^3 + 2160q^4 + \ldots 
\end{equation}
This motivated our choice of characters. From the outset, the summand $M_3(48,\chi_3)$ in the optimization space gives enough flexibility to at least recover the density of the (presumed) optimal packing.
The dual lattice $E_6^*$, suitably scaled, has theta series
\[
9f_1(z) - 9f_6(z) = 1 + 0q + 54q^2 + 72q^3 + 0q^4 + 432q^5 + 270q^6 + 0 q^7 + 918q^8 + 720q^9 + \ldots
\]
The other subspace for character $\chi_4$ contains the theta series of the lattices $D_6$ and $\Z^6$, namely
\begin{align*}
\vartheta_{D_6}(z) &= 64f_{23}(z) - 4f_{29}(z) = 1 + 60q + 252q^2 + 544q^3 + 1020q^4 + \ldots \\ 
\vartheta_{\Z^6}(z) &= 16f_{23}(z) - 4f_{29}(z) = 1+ 12q + 60q^2 + 160q^3 + 252q^4 + \ldots 
\end{align*}
The space also contains theta series for the dual lattices, but as we explain in Section~\ref{sec:d6-z6}, there is a linear relation beween those of $D_6$, $D_6^*$, and $\Z^6$.
We increased the level $N$ along multiples of 12 so that all these theta series would be available, and tried several values of $T$ at each level, stopping at the pair $(N=48,T=7)$ where the resulting bound was strong enough to prove Corollary~\ref{cor:cor}.
Theta series are convenient because they automatically satisfy the inequalities from Proposition~\ref{prop:bound}.

An important feature of the choice of level $N=48$ is that it allows certain quadratic twists. Recall, for instance from \cite[Proposition 14.19]{IK}, that for a modular form $f$ and a Dirichlet character $\chi$, the twist $f \otimes \chi$ is defined by
\begin{equation} \label{eqn:twist-q}
f(z) = \sum_{n} c_f(n) q^n \implies f \otimes \chi (z) = \sum_n c_f(n) \chi(n) q^n
\end{equation}
This is again modular, but generally of level higher than $f$. If $f$ has level $N$ with a character of conductor $N^*$, while $\chi$ is a character modulo $t$, then $f \otimes \chi$ has level equal to the least common multiple $\lcm(N,N^* t,t^2)$ and character equal to that of $f$ multiplied by $\chi^2$. 
In particular, for a quadratic character, $\chi^2=1$ so $f \otimes \chi$ and $f$ have the same character.
We will take $f$ of character $\chi_3$ at level $N=3$, $12$, or $24$ and twist by $\chi_4$. In all these cases, $t=4$ and $\lcm(N,N^*t, t^2) = 48$, so these twists remain in the optimization space.

Another important aspect of the analysis is scaling.
The divisors of $N=48=2^4 \cdot 3$ are 1, 2, 3, 4, 6, 8, 12, 16, 24, 48. There are (non-zero) modular forms for $\chi_3$ at each sub-level 3, 6, 12, 24, 48 and modular forms for $\chi_4$ at 4, 8, 12, 16, 24, 48.
Figure~\ref{fig:48divisors} shows the dimensions of these subspaces.
These dimensions can be computed with Pari by finding a basis for each subspace using \texttt{mfbasis}. 
The spaces for $\chi_3$ and $\chi_4$ are implemented as \texttt{mfinit([48,3,-3])} and \texttt{mfinit([48,3,-4])}, and similarly for divisors of 48, with \texttt{mfinit(\ldots,\{1\})} for the cuspidal space or \texttt{mfinit(\ldots,\{3\})} for the Eisenstein space. 
There are also explicit dimension formulas, which are explained for instance in \cite[section 6.3]{S} and implemented in Pari's command \texttt{mfdim}.

\begin{figure}[t]
\begin{tikzpicture}[xscale=1.618]
\pgfmathsetmacro{\w}{3}
\pgfmathsetmacro{\h}{1}
\pgfmathsetmacro{\x}{0.6}
\draw (0,0) node[draw=black,circle](3){3};
\draw (0,\h) node[draw=black,circle](6){6};
\draw (0,2*\h) node[draw=black,circle](12){12};
\draw (0,3*\h) node[draw=black,circle](24){24};
\draw (\w,0) node[draw=black,circle](4){4};
\draw (\w,\h) node[draw=black,circle](8){8};
\draw (\w,2*\h) node[draw=black,circle](16){16};
\draw (0.5*\w,4*\h) node[draw=black,circle](48){48};
\draw (3)--(6)--(12)--(24)--(48);
\draw (4)--(8)--(16)--(48);
\draw (4)--(12);
\draw (8)--(24);
\draw (3)++(180:\x) node{2};
\draw (6)++(180:\x) node{4};
\draw (12)++(180:\x) node{6+1};
\draw (24)++(180:\x) node{8+4};
\draw (48)++(180:\x) node{12+10};
\draw (48)++(0:\x) node{12+10};
\draw (24)++(0:\x) node{8+4};
\draw (16)++(0:\x) node{6+1};
\draw (12)++(0:\x) node{4+2};
\draw (8)++(0:\x) node{4};
\draw (4)++(0:\x) node{2};
\draw (-\x,-0.5*\h) node{$E_6$, $E_6^*$};
\draw (\w+\x,-0.5*\h) node{$D_6$, $D_6^*$};
\end{tikzpicture}
\caption{Dimensions of the spaces of modular forms for each divisor of $48$. The subspaces for the character $\chi_3$ are written to the left of each circled divisor, and those for $\chi_4$ to the right. The dimensions are written as $e+c$ where $e$ is the dimension of the Eisenstein subspace, and $c$ the dimension of the cuspidal subspace.
At the bottom levels 3 and 4, the theta series of the lattices $E_6$, $E_6^*$, $D_6$, and $D_6^*$ form a basis of Eisenstein series.
A new scaling $z \mapsto sz$, where $s$ divides 12 or 16, applies at each level. These scalings, together with quadratic twists of $E_6$ and $E_6^*$ arising at the top level, form a basis of Eisenstein series.}
\label{fig:48divisors}
\end{figure}
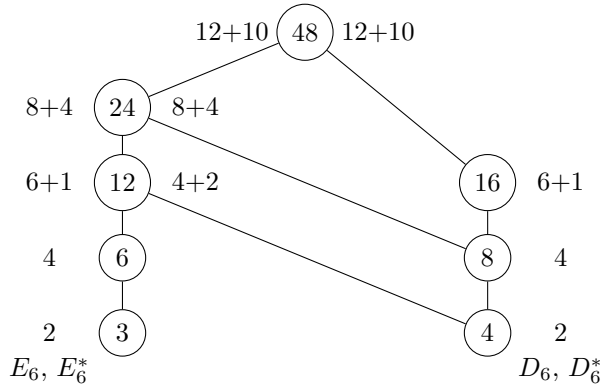

In total, we have a 44-dimensional space of modular forms, with 24-dimensional Eisenstein subspace and 20-dimensional cuspidal subspace. As shown in the chart \ref{fig:48divisors}, there are 2 independent Eisenstein series at each of the bottom levels 3 and 4. These can be thought of as the theta series of $E_6$ and its dual $E_6^*$ for level 3, or $D_6$ and $D_6^*$ for level 4.
Rather than these theta functions, the output of \texttt{mfbasis} involves four Eisenstein series $f_1$, $f_6$, $f_{23}$, and $f_{29}$ which take a simpler form in terms of $\chi_3$ and $\chi_4$:
\begin{align}
f_1(z) &=  \sum_{n=1}^{\infty} q^n \sum_{d \mid n} d^2 \chi_3(n/d) 
= q + 3q^2 + 9q^3 + 13q^4 + \ldots \label{f1} \\
f_6(z) &=\frac{1}{2}L(-2,\chi_3)+ \sum_{n=1}^{\infty} q^n \sum_{d \mid n} d^2 \chi_3(d)
= -\frac{1}{9} + q - 3q^2 + q^3 + 13q^4  +\ldots \label{f6}  \\
f_{23}(z) &= \sum_{n=1}^{\infty} q^n \sum_{d \mid n} d^2 \chi_4(n/d) 
= q + 4q^2 + 8q^3 + 16q^4 + \ldots \label{f23}  \\
f_{29}(z) &=\frac{1}{2}L(-2,\chi_4)+ \sum_{n=1}^{\infty} q^n \sum_{d \mid n} d^2 \chi_4(d) 
= -\frac{1}{4} + q + q^2 - 8q^3 + q^4 +\ldots \label{f29} 
\end{align}
The constant terms involve Dirichlet $L$-values at $1-k$ for weight $k=3$. See \cite[Theorem 4.5.1]{DS1}.

The rest of the Eisenstein basis is given by scaling $f_{1}$ and $f_{6}$ by divisors of $48/3=16$, and scaling $f_{23}$ and $f_{29}$ by divisors of $48/4=12$, as well as two quadratic twists.
Note that twisting by $\chi_4$ increases levels by $4^2$, hence from 3 to 48, as per \cite[Proposition 14.19]{IK}.
The twists are
\begin{align}
f_{11} &= f_1 \otimes \chi_4, \quad f_{11}(z) = \sum_{n=1}^{\infty} q^n \chi_4(n) \sum_{d \mid n} d^2 \chi_3(n/d)  \label{f11}\\
f_{12} &= f_6 \otimes \chi_4, \quad f_{12}(z) = \sum_{n=1}^{\infty} q^n \chi_4(n) \sum_{d \mid n} d^2 \chi_3(d) \label{f12}
\end{align}
The scalings are indexed by the size of $s$ in $z \mapsto sz$, so that
\begin{equation*}
\begin{aligned}
&f_2(z) = f_1(2z),& &f_7(z)=f_6(2z),& &f_{24}(z)=f_{23}(2z),& &f_{30}(z)=f_{29}(2z)& \\
&& && &f_{25}(z)=f_{23}(3z),& &f_{31}(z)=f_{29}(3z)& \\
&f_3(z)=f_1(4z),& &f_8(z) = f_6(4z),& &f_{26}(z) = f_{23}(4z),& &f_{32}(z)=f_{29}(4z)&\\
&& && &f_{27}(z)=f_{23}(6z),& &f_{33}(z)=f_{29}(6z)& \\
&f_4(z)=f_1(8z),& &f_9(z)=f_6(8z),& && && \\
&& && &f_{28}(z)=f_{23}(12z),& &f_{34}(z)=f_{29}(12z)& \\
&f_5(z)=f_1(16z)& &f_{10}(z)=f_6(16z)& && &&
\end{aligned}
\end{equation*}

In the approach of Cohn--Triantafillou, the cuspforms are eventually discarded as a small correction to the Eisenstein series. The method can be applied without knowing them in closed form.
All the necessary information can be obtained automatically using Pari's commands to diagonalize the Hecke operators, calculate Atkin--Lehner transforms, and compute a finite number of Fourier coefficients.

However, our proof of Theorem~\ref{thm:main} uses a candidate with vanishing coefficients $a_n = 0$ for all $n \equiv 1 \bmod 4$. To show that an infinite number of coefficients vanish, we will need to know slightly more about the basis functions. In particular, for the character $\chi_3$, the cuspforms of level 48 can all be expressed as quadratic twists of lower-level basis functions:
\begin{equation} \label{eqn:twists}
\begin{pmatrix} f_{20} \\ f_{21} \\ f_{22} \end{pmatrix} = \begin{pmatrix} 1 & 1 & 0 \\ 3 & 0 & -1 \\ 0 & 2 & -2 \end{pmatrix} \begin{pmatrix} f_{13} \otimes \chi_4 \\ f_{16} \otimes \chi_4 \\ f_{18} \otimes \chi_4 \end{pmatrix}
\end{equation}
The structure of the basis with respect to scalings is as follows:
\begin{align*}
f_{14}(z) &= f_{13}(2z), \quad f_{15}(z) = f_{13}(4z) \\
f_{17}(z) &= f_{16}(2z) \\
f_{19}(z) &= f_{18}(2z) \\
f_{36}(z) &= f_{35}(2z), \quad f_{37}(z) = f_{35}(4z) \\
f_{39}(z) &= f_{38}(2z), \quad f_{40}(z) = f_{38}(4z) \\
f_{42}(z) &= f_{41}(3z)
\end{align*}
The remaining basis elements $f_{13}$, $f_{16}$, $f_{18}$, $f_{35}$, $f_{38}$, $f_{41}$, $f_{43}$, $f_{44}$ are handled using the GP/Pari library \cite{pari}.
Their Fourier expansions begin as follows:
\begin{align*}
f_{13}(z) &= q-3q^3 + 2q^7 + 9q^9 - 22q^{13} + \ldots \\ 
f_{16}(z) &= 2q + 2q^3 - 12q^7 - 14q^9 + 20q^{13} + \ldots \\
f_{18}(z) &= 2q - 14q^3  + 32q^5 - 12q^7 - 46q^9 + 32q^{11} + 20q^{13} + \ldots \\
f_{35}(z) &=2\big(q - q^2 - 2q^4 + 3q^6 + 8q^8 - 3q^9 + 2q^{10} - 6q^{12} + 2q^{13} + \ldots \big) \\
f_{38}(z) &= -2\big(q+2q^2 - 3q^3 - 8q^4 - 2q^5 + 6q^6 + 12q^7 + 8q^8 - 3q^9 - 4q^{10} - \ldots \big) \\ 
f_{41}(z) &= q - 6q^5 + 9q^9 + 10q^{13} + \ldots \\
f_{43}(z) &=2 \big(q + 6q^5 - 3q^9 - 14q^{13} + \ldots \big) \\
f_{44}(z) &= -6 \big(q^3 + 4q^7 - 12q^{11} + \ldots \big)
\end{align*}
Some of them have simple expressions as products of Dedekind eta functions, for example
\[
f_{13}(z)= \eta(2z)^3 \eta(6z)^3 = q \prod_{n=1}^{\infty} (1-q^{2n})^3(1-q^{6n})^3
\]
but we do not make any use of such formulas.

The transforms of the basis functions can be computed using Pari's commands \texttt{mfatkininit} and \texttt{mfatkin}. The former outputs various data, including a matrix representing the action of $z \mapsto -1/(Nz)$ on each basis function, together with a complex floating-point scalar for normalization. We must divide by this factor to compare (\ref{eqn:transform}), which is normalized so that the transform is an involution, with Pari's convention by which the transform has coefficients in the same field as the original modular form (despite the factors $i^k$ and $N^{k/2}$). The second command \texttt{mfatkin} can then be applied to individual forms.

The transform preserves the Eisenstein and cuspidal spaces for each character. It is represented by a block diagonal matrix:
\begin{equation} \label{eqn:w-block-matrix}
W = \begin{pmatrix}
W_{3,e} & & & \\
& W_{3,c} & & \\
&& W_{4,e} & \\
&&& W_{4,c}
\end{pmatrix}
\end{equation}
This is a $44 \times 44$ matrix where the first 12 indices correspond to Eisenstein series for $\chi_3$, the next 10 to cuspforms for $\chi_3$, and then likewise there are 12 and 10 more indices corresponding to Eisenstein series and cuspforms for $\chi_4$.
Missing entries denote $0$ for brevity.
Column by column, $W_{3,e}$ gives the transforms of the Eisenstein series for $\chi_3$ and equals
\[
\setcounter{MaxMatrixCols}{12}
{\tiny
\frac{1}{4}
\begin{pmatrix}
&&&&&&&&& -3/16 &&\\
&&&&&&&&-3/2 & &&\\
&&&&&&&-12 && &&\\
&&&&&& -96 &&& && \\
&&&&&-768 &&&& && \\
&&&&-1/48 &&&&& && \\
&&& -1/6 &&&&&& && \\
&& -4/3 &&&&&&& &&\\
& -32/3 &&&&&&&& && \\
-256/3 &&&&&&&&& && \\
&&&&&&&&&  & & -12 \\
&&&&&&&&&  & -4/3 &
\end{pmatrix}
}
\]
and likewise for cuspforms 
\[
W_{3,c}=\frac{1}{4}
\begin{pmatrix}
& & 1/2 & & & & & & & \\
 & 4 &  & & & & & & &\\ 
32 & & & & & & & & & \\
&&&  & 3/2 &  & 3/2 & & & \\
&&& 12 &  & 12 &  & & & \\
&&&  & -1/6 &  & -3/2 &&& \\
&&& -4/3 &  & -12 &  &&& \\
&&&&&&& 2 & -6 & -16 \\
&&&&&&& 2/3 & 6 & 16/3 \\
&&&&&&& -1 & -3 & -4
\end{pmatrix}
\]
For $\chi_4$, the normalization is different, with $W_{4,e}$ given by 
\setcounter{MaxMatrixCols}{12}
{\tiny
\[
\frac{1}{\sqrt{12}}
\begin{pmatrix}
&&&&&&&&&&&  -1/3 \\
&&&&&&&&&&-8/3 & \\
&&&&&&&&&-9 && \\
&&&&&&&& -64/3 &&& \\
&&&&&&&-72 &&&& \\
&&&&&& -576 &&&&& \\
&&&&& -1/48 &&&&&& \\
&&&& -1/6 &&&&&&& \\
&&& -9/16 &&&&&&&& \\
&&-4/3 &&&&&&&&& \\
& -9/2 &&&&&&&&&& \\
-36 &&&&&&&&&&&
\end{pmatrix}
\]
}
and likewise
\[
W_{4,c} =
\frac{1}{\sqrt{12}} 
\begin{pmatrix}
 &  & 1/4 & & & -1 &&&& \\
 & 2 &  & & -8 & &&&& \\
16 &  & & -64 & & &&&& \\
&& -1/8 & &  & -1/4 &&&& \\
& -1 & & & -2 & &&&& \\
-8 & & & -16 &&  &&&& \\
&&&&&& & 2/3 && \\
&&&&&& 18 & && \\
&&&&&&&& 3 & -3 \\
&&&&&&&& -1 & -3
\end{pmatrix}
\]
The only basis functions with a constant term are $f_6 = -1/9 + \ldots$, $f_{29} = -1/4 + \ldots$, and their scalings $f_{7,8,9,10}$ and $f_{30,31,32,33,34}$. These are proportional to the transforms of $f_{1,2,3,4,5}$ and $f_{23, 24, 25, 26,27,28}$, up to the factors from $W_{3,e}$ and $W_{4,e}$. The coefficient $b_0$ to be maximized therefore takes the form claimed in (\ref{b0}).

\section{Choice of coefficients} \label{sec:choice}

In this section, we specify the values $x_j$ used to define the candidate $g = \sum_j x_j f_j$. Recall that the goal is to maximize a product $b_0 T^3$ where both $g=\sum_n a_n q^n = 1 + a_T q^T + \ldots$ and its transform $\widetilde{g} = \sum_n b_n q^n$ are required to have non-negative coefficients. We choose $T=7$.

The values $x_1, \ldots, x_{44}$ were obtained as follows. First, consider a truncated linear program forcing the constraints from Proposition~\ref{prop:bound} for finitely many values of $n$.
This relies on Pari's command \texttt{mfcoefs} to obtain the coefficients of $f_1, \ldots, f_{44}$ that appear in these constraints, and \texttt{mfatkininit} or \texttt{mfatkin} for the transforms $\widetilde{f_j}$. To solve the linear program, we used the software package GLPK \cite{glpk}. 
The resulting values for $x_j$ are given in floating-point to double precision, and the constraints are only approximately satisfied. We then guess values of $n$ for which a constraint such as $a_n \geq 0$ or $b_n \geq 0$ seems to hold with equality, modulo numerical imprecision. The vanishing coefficients, together with the normalization $a_0=1$, give a system of equations for $x_1, \ldots, x_{44}$ with coefficients in $\Q(\sqrt{N})$, since the Atkin-Lehner transform $\widetilde{g}$ from (\ref{eqn:transform}) involves a factor $N^{k/2}$. Recall that $k=3$ is odd in our case. 

In our case, it appeared that $a_n = 0$ whenever $n \equiv 1 \bmod 4$. As we will see in Section~\ref{sec:1mod4}, this follows from just eleven equations:
\begin{align*}
x_{10} = x_{41} = x_{43} &= 0 \\
x_1 + x_{11} &= 0 \\
 x_6 + x_{12} &= 0 \\
x_{23}+x_{29} &= 0 \\
x_{25} - x_{31} &= 0 \\
x_{35}-x_{38} &= 0 \\
x_{13} + x_{20} + 3x_{21} &= 0 \\
x_{16} + x_{20} + 2x_{22} &= 0 \\
x_{18} - x_{21} -2x_{22} &= 0
\end{align*}
In addition to this infinite progression where $a_n=0$, we impose
\begin{equation} \label{eqn:a0=1}
a_0 = 1
\end{equation}
\begin{align}
a_n = 0 \quad \text{for} \ n \in \{ &2, 3, 4, 6, 8, 10, 11, 12, 22, 26, 32, 38, \label{force}\\
&60, 64, 88, 90, 92, 106, 164, 1932 \} \nonumber
\end{align}
\begin{equation} \label{eqn:transforce}
b_n = 0 \quad \text{for} \ n \in \{1, 2, 3, 4, 7, 8, 9, 10, 13, 14, 36, 82 \}
\end{equation}
Together, these amount to $11+1+20+12=44$ equations for the 44 variables $x_j$.
There is a unique solution, which can be calculated exactly in $\Q(\sqrt{N})$, for instance by working in a polynomial ring modulo $X^2-N$ and using Pari's command \texttt{matinverseimage}.
The fact that $a_{1932}=0$ illustrates that, even for $n \approx 2000$, the Eisenstein part is not yet dominant over the cuspidal part. Moreover, without checking a high enough number of initial coefficients, one would have fewer than 44 equations and the system would not uniquely specify the solution.
In fact, in our first attempt at writing the solution exactly, we had only 43 equations but could determine the one remaining free parameter from the inequalities $a_n, b_n \geq 0$ and the objective to maximize $b_0$.

Table~\ref{table:numbers} shows the solution $x$ truncated to 4 digits.
The exact values are complicated, for instance
\begin{align*}
x_1=&\frac{13305835801389714487894701329762261844642710724107326137255}{5580210204055442306717732912787404128951045878479406537152} \\
&-\frac{5933985634895097866727312605651608472538244579636113805857}{5580210204055442306717732912787404128951045878479406537152}\sqrt{3}
\end{align*}
which is approximately $0.54260880498140096513867653943544603187\ldots$
Approximate values are enough to check any strict inequality $a_n > 0$ or $b_n > 0$. 
The exact quadratic values are important in cases where $a_n$ or $b_n$ is 0, which in the first stage of the calculation could have been a very small negative number. However, these zeros have all been accounted for in setting up the system of equations, so we need only the existence of an exact solution rather than the particular values.

The factors of $1932 = 12 \cdot 7 \cdot 23$ play a role in the vanishing coefficient $a_{1932}=0$, the largest from (\ref{force}). We will see in Section~\ref{sec:eisenstein} that the Eisenstein contribution to $a_n$ is at its smallest relative to $n^2$ when $n=12n_0$, with $n_0 \equiv 5 \bmod 12$ and $n_0$ divisible by many primes $p \equiv 7 \bmod 12$. 

\begin{table}
\caption{Approximate values of the 44 coefficients $x_i \in \Q(\sqrt{3})$ expressing $g=\sum_i x_i f_i$ in the basis from Section~\ref{sec:space}, written as $i=10h + v$ with $h \leq 4$ running horizontally and $v \leq 9$ running vertically. 
The values $x_{10}=x_{41}=x_{43}=0$ are exact. Exact values for the other $x_i$ can be found in a supplementary file \texttt{supplement.txt} available at \url{https://arxiv.org/abs/2211.09044}.
The coefficients $x_1$ through $x_{22}$ correspond to the character $\chi_3$, while $x_{23}$ through $x_{44}$ correspond to $\chi_4$.
The Eisenstein indices are $1 \leq i \leq 12$ and $23 \leq i \leq 34$, with the cuspidal part shaded.
}
\label{table:numbers}
\begin{tabular}{c|rrr|rrr}
& 0 & 1 & 2 &  2 & 3 & 4 \\ \hline
0 &  & $0.0000$ & \cellcolor{gray!40!white}$0.1155$ & & $-0.0880$ & \cellcolor{gray!40!white}$-1.0698$ \\ 
1 & $0.5426$ & $-0.5426$ & \cellcolor{gray!40!white}$0.1669$ & & $-0.3790$  & \cellcolor{gray!40!white}$0.0000$ \\ 
2 & $-1.2415$ & $0.0961$ & \cellcolor{gray!40!white}$-0.0774$ & & $-0.0134$  & \cellcolor{gray!40!white}$-18.0274$ \\ 
3 & $-3.3991$ & \cellcolor{gray!40!white}$-0.6164$ & & $-0.0943$  & $2.8874$ & \cellcolor{gray!40!white}$0.0000$ \\ 
4 & $-1.7265$ & \cellcolor{gray!40!white}$0.0010$ & & $0.0606$  & $-6.7493$ & \cellcolor{gray!40!white}$-1.6689$ \\ 
5 & $-0.8136$ & \cellcolor{gray!40!white}$0.4724$ & & $-0.3790$& \cellcolor{gray!40!white}$-0.4690$ & \\ 
6 & $-0.0961$ & \cellcolor{gray!40!white}$0.0394$ & & $1.4476$ & \cellcolor{gray!40!white}$-1.3134$ & \\ 
7 & $-0.2722$ & \cellcolor{gray!40!white}$-1.0533$ & & $3.5885$ & \cellcolor{gray!40!white}$-1.5459$ & \\ 
8 & $0.5140$ & \cellcolor{gray!40!white}$0.0120$ & &  $-4.8033$ & \cellcolor{gray!40!white}$-0.4690$ & \\ 
9 & $0.4126$ & \cellcolor{gray!40!white}$0.08926$ & & $0.0943$ & \cellcolor{gray!40!white}$-0.8239$ & \\ 
\end{tabular}
\end{table}

\section{The cuspidal part} \label{sec:cusp-total}

In this section, we bound the contributions to $a_n$ and $b_n$ from cuspforms, and prove some observations about the basis functions that will be used to show that $a_n = 0$ for $n \equiv 1 \bmod 4$. 
Both applications rely on a change of basis that is also useful for computational purposes.
\begin{proposition} \label{prop:observations}
The following relations hold between the coefficients of $q^n$ in the basis functions $f_{1},\ldots, f_{44}$. Write $[i]_n$ for the coefficient of $q^n$ in $f_i$.
\begin{enumerate}
\item \label{44n1mod4}
For $n \equiv 1 \bmod 4$, $[44]_n = 0$.
\item \label{41n1mod4}
For $n \not\equiv 1 \bmod 4$, $[41]_n = 0$.
\item \label{3538n1mod4}
For $n \equiv 1 \bmod 4$, $[35]_n = -[38]_n$.
\item \label{22n1mod3}
For $n \equiv 1 \bmod 3$, $[22]_n = 0$.
\item \label{20n2mod3}
For $n \equiv 2 \bmod 3$, $[20]_n = 0$.
\end{enumerate}
\end{proposition}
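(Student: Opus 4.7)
The strategy is to exploit the action of the quadratic twists by $\chi_3$ and $\chi_4$ on the cuspidal subspaces: whenever a cuspform $h$ (with all even-index coefficients vanishing) satisfies $h\otimes\chi_4 = \epsilon h$ with $\epsilon = \pm 1$, its Fourier coefficients vanish along the arithmetic progression where $\chi_4(n) = -\epsilon$. The analogous mechanism with $\chi_3$ gives vanishing modulo 3.

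First I would handle items (\ref{22n1mod3}) and (\ref{20n2mod3}) using the twist identities (\ref{eqn:twists}). Writing $[f_{20}]_n = \chi_4(n)\bigl([f_{13}]_n + [f_{16}]_n\bigr)$ and $[f_{22}]_n = 2\chi_4(n)\bigl([f_{16}]_n - [f_{18}]_n\bigr)$, and noting that $\chi_4(n) = 0$ for even $n$ makes both claims trivial in that case, the problem reduces to identifying, for odd $n$, relations among the coefficients of the $\chi_3$-cuspforms $f_{13}, f_{16}, f_{18}$ modulo 3. These cuspforms come from levels 12 and 24, where the newform space for $\chi_3$ is small enough that one can describe them as explicit Hecke eigenforms (or diagonalize within a two-dimensional isotypic component). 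The key step is to show $[f_{16}]_n = [f_{18}]_n$ when $n \equiv 1 \bmod 3$, and $[f_{13}]_n = -[f_{16}]_n$ when $n \equiv 2 \bmod 3$, which reflects how $\chi_3(p)$ enters the Hecke eigenvalue at primes $p$, coupled with the effect of scaling $z \mapsto 2z$ on the index modulo 3.

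For items (\ref{44n1mod4}), (\ref{41n1mod4}), and (\ref{3538n1mod4}), I would first identify the ten cuspforms $f_{35},\dots,f_{44}$ via their scaling/twist structure: $f_{35}, f_{38}$ lie in the level-12 newform space, $f_{41}$ comes from level 16, and $f_{43}, f_{44}$ arise from level-24 or new level-48 forms. The plan for (\ref{41n1mod4}) is to show $f_{41}$ is twist-invariant ($f_{41}\otimes\chi_4 = f_{41}$) with vanishing even-index coefficients, so $[41]_n$ vanishes whenever $\chi_4(n) \ne 1$, i.e. for $n \not\equiv 1 \bmod 4$. Symmetrically, for (\ref{44n1mod4}) one realizes $f_{44}$ as twist-anti-invariant ($f_{44}\otimes\chi_4 = -f_{44}$) with vanishing even-index coefficients, so $[44]_n = 0$ on $n \equiv 1 \bmod 4$. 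For (\ref{3538n1mod4}), the combination $f_{35}+f_{38}$ arises from a single level-12 Hecke orbit whose image in the level-48 space admits a decomposition $h^{\text{odd}} - h\otimes\chi_4$ (after an appropriate normalization), which annihilates the $n \equiv 1 \bmod 4$ part.

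The principal obstacle is verifying that $f_{41}$ and $f_{44}$ actually have vanishing coefficients at all even indices, since this is required for the twist-invariance argument to upgrade vanishing on odd progressions to vanishing on the full set $n\not\equiv 1 \bmod 4$. I would address this via dimension counting: the newform space at level 48 with character $\chi_4$ has a subspace on which the twist by $\chi_4$ acts with eigenvalues $\pm 1$, and this subspace is spanned by forms supported on odd $n$. A direct identification of $f_{41}$ and $f_{44}$ inside this twist-isotypic decomposition, together with an elementary check that the proposed scaling ancestors $f_{42} = f_{41}(3z)$ etc.\ are consistent with the coefficient vanishing, completes the argument. For (\ref{3538n1mod4}) the analogous check is that the restriction of $f_{35}+f_{38}$ to odd indices is twist-anti-invariant, which can be confirmed from the explicit Hecke eigenform expansions at level 12.
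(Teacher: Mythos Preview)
Your proposal circles the right target but misses the single lemma that makes all five items fall out at once. The paper does not argue via twist-invariance; instead it changes to a basis of Hecke eigenforms $h_1,\ldots,h_{12}$ and invokes the adjoint relation $\langle T_n f, g\rangle = \chi(n)\langle f, T_n g\rangle$ (Iwaniec, Theorem~6.20). Applied with $f=g=h$ a normalized eigenform of character $\chi$, this gives $\lambda_n = \chi(n)\overline{\lambda_n}$, so the $n$-th coefficient of $h$ is real when $\chi(n)=1$ and purely imaginary when $\chi(n)=-1$. Since each $f_i$ is expressed explicitly as a real linear combination of $\Re(h_j)$'s and $\Im(h_j)$'s, the vanishing statements are immediate: e.g.\ $f_{22}=8\sqrt{2}\,\Im(h_5)$ kills $n\equiv 1\bmod 3$, $f_{20}=\tfrac{8}{7}(h_4+2\Re(h_5))$ kills $n\equiv 2\bmod 3$, and $f_{38}=-f_{35}-2\sqrt{3}\,\Im(h_7)$ gives $[38]_n=-[35]_n$ when $n\equiv 1\bmod 4$.

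Your twist formulation is not wrong, but it is downstream of this fact rather than an independent route to it. Saying ``$f_{41}\otimes\chi_4=f_{41}$'' is essentially equivalent to the conclusion you want, so you still owe an argument, and ``dimension counting'' is not one until you pin down the level of the twist and the dimension of the target space. Likewise, for items~(\ref{22n1mod3}) and~(\ref{20n2mod3}) your reduction via (\ref{eqn:twists}) to relations among $[f_{13}]_n,[f_{16}]_n,[f_{18}]_n$ is correct, but the relations themselves (in fact $[f_{13}]_n=[f_{16}]_n=0$ for $n\equiv 2\bmod 3$, and $[f_{16}]_n=[f_{18}]_n$ for $n\equiv 1\bmod 3$) are exactly what the adjoint lemma delivers; the phrase ``reflects how $\chi_3(p)$ enters the Hecke eigenvalue'' does not substitute for it. Finally, the even-$n$ vanishing you flag as the ``principal obstacle'' is handled in the paper's framework because the relevant eigenforms have $a_2=0$ at their level (so multiplicativity kills all even indices), but this too is easier to see once you have the Hecke basis in hand rather than trying to deduce it from twist-isotypy.
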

All of these patterns can be explained by changing to a basis of Hecke eigenforms.
We refer to \cite[Chapter 6]{I} for background on Hecke operators. This is one of the aspects of the framework from \cite{CT} that changes slightly because we consider modular forms with a non-trivial character. In particular, not all the Hecke operators $T_n$ are self-adjoint, and in fact the adjoint of $T_n$ on a space of modular forms with character $\chi$ is $\overline{\chi(n)} T_n$ \cite[Theorem 6.20]{I}. Nevertheless, the $T_n$ are commuting normal operators, and can therefore be diagonalized simultaneously.

\begin{proposition} \label{prop:C}
For all $n \geq 1$, the total contributions from cuspforms to $q^n$ in $g$ or $\widetilde{g}$ are at most:
\begin{align}
|a_{n,{\rm cusp}} | &\leq 21.6161 \cdot n \sigma_0(n) \label{eqn:C-an}\\
|b_{n,{\rm cusp}} | &\leq 24.0266 \cdot  n \sigma_0(n) \label{eqn:C-bn}
\end{align}
\end{proposition}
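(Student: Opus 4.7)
The plan is to decompose the cuspidal parts $g_{\rm cusp}$ and $\widetilde{g}_{\rm cusp}$ in a basis of (scalings and twists of) normalized Hecke newforms, then apply Deligne's bound (Theorem~\ref{thm:cusp-bound}) to each component. Because $k=3$, that bound reads $|c_h(n)| \leq n\sigma_0(n)$ for each normalized newform $h$.

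First I would refine the $20$-dimensional cuspidal basis $f_{13},\ldots,f_{22}$ and $f_{35},\ldots,f_{44}$ into one adapted to Hecke theory. On the $\chi_3$ side this means: the newform $f_{13}$ at level $12$ together with its scalings $f_{14}=f_{13}(2z)$ and $f_{15}=f_{13}(4z)$; the two newforms $f_{16}$ and $f_{18}$ at level $24$ together with their scalings $f_{17}$ and $f_{19}$; and the three quadratic twists $f_{13}\otimes\chi_4$, $f_{16}\otimes\chi_4$, $f_{18}\otimes\chi_4$ that span the new subspace at level $48$ by (\ref{eqn:twists}). The $\chi_4$ side decomposes analogously using the scaling relations recorded in Section~\ref{sec:space}, together with whatever newforms appear at level $48$. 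Each element of the refined basis thus has the form $(h\otimes\chi)(sz)$ where $h$ is a normalized Hecke newform and $\chi$ is trivial or quadratic, so its coefficient of $q^n$ is either $0$ or $\chi(n/s)c_h(n/s)$. By Theorem~\ref{thm:cusp-bound} and the monotonicity $\sigma_0(n/s)\leq \sigma_0(n)$ whenever $s\mid n$, this coefficient is bounded in absolute value by $n\sigma_0(n)$.

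Writing $g_{\rm cusp}=\sum_\alpha c_\alpha e_\alpha$ in the refined basis and bounding term-by-term yields
\[
|a_{n,{\rm cusp}}| \;\leq\; \Bigl(\sum_\alpha |c_\alpha|\Bigr)\, n\,\sigma_0(n),
\]
and the same reasoning gives an identical shape bound for $|b_{n,{\rm cusp}}|$ applied to $\widetilde{g}_{\rm cusp}$. The stated constants $21.6161$ and $24.0266$ then arise by numerically computing $\sum_\alpha |c_\alpha|$ in each case, using the change-of-basis matrix from the $f_j$-basis to the eigenform basis, and for $\widetilde{g}_{\rm cusp}$ also using the cuspidal Atkin--Lehner blocks $W_{3,c}$ and $W_{4,c}$ from Section~\ref{sec:space}.

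The main obstacle is organizational rather than conceptual: the Hecke eigenvalues for some of the newforms generate small number fields, so one must carry along all complex embeddings of each eigenform orbit and compute the coefficients $c_\alpha$ with enough precision to certify the specific decimal constants. Care is also needed with the normalization factor $N^{-k/2}=48^{-3/2}$ from (\ref{eqn:transform}), which is responsible for the $1/\sqrt{12}$ prefactor appearing in $W_{4,c}$ and must be absorbed cleanly into the bound on $|b_{n,{\rm cusp}}|$ to avoid double-counting.
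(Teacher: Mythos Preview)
Your proposal is correct and follows essentially the same route as the paper: pass to a basis of (scalings of) normalized Hecke eigenforms, apply Deligne's bound $|c_h(n)|\le n\,\sigma_0(n)$ to each, and collect terms by the triangle inequality. One small correction: $f_{16}$ and $f_{18}$ are not themselves newforms at level $24$---the genuine Hecke eigenforms there are the complex combinations $h_2,h_3=\overline{h_2}$ with coefficients in $\Q(\sqrt{-8})$ (and likewise $h_7,h_8$ and $h_9,h_{10}$ on the $\chi_4$ side), which the paper writes out explicitly; you essentially anticipate this with your remark about number-field embeddings, and it changes only the bookkeeping, though the precise constants $21.6161$ and $24.0266$ come from the paper's specific packaging (a case split on $\gcd(n,48)$ and retaining the $1/s$ gain from scalings), so your $\sum_\alpha|c_\alpha|$ need not reproduce them verbatim even though it gives a valid bound of the same shape.
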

For comparison, note that the cuspidal part involves a large value $x_{42} \approx -18.0274$ from Table~\ref{table:numbers}.

To apply Theorem~\ref{thm:cusp-bound}, we change to a basis of Hecke eigenforms computed from Pari's commands \texttt{mfheckemat} and \texttt{mateigen}.
The command \texttt{mfheckemat} returns a matrix representing a Hecke operator $T_n$ acting on the cuspidal subspace for $\chi_3$ or $\chi_4$, and \texttt{mateigen} can then give a basis of eigenvectors of this matrix. The output of \texttt{mateigen} in our case involves floating-point approximations to $\sqrt{8}$ and $\sqrt{3}$, which are easy enough to recognize, and can also be justified rigorously by factoring the characteristic polynomial. 

It is enough to consider $f_{13}$, $f_{16}$, $f_{18}$, $f_{20}$, $f_{21}$, $f_{22}$ for $\chi_3$, and $f_{35}$, $f_{38}$, $f_{41}$, $f_{43}$, $f_{44}$ for $\chi_4$. The rest of the basis is obtained by scaling these. 
We divide by the leading coefficient to obtain the following normalized Hecke eigenforms. They come in pairs under complex conjugation, which we denote by $K$. This is an abbreviation for complex conjugation of each coefficient in the $q$-series, not pointwise conjugation as functions of a variable $z$ in the half-plane $\Im(z) > 0$. 
In the same way, we will write $h = \Re(h) + i \Im(h)$ and $Kh = \Re(h) - i\Im(h)$ coefficient-wise.
The basis functions $f_j$ have real coefficients (in fact, integer coefficients).

For $\chi_3$, let
\begin{align*}
h_1 &= f_{13} \\
h_2 &= \frac{1}{4\sqrt{2} i} \left( (-1+\sqrt{8}i)f_{16} + f_{18} \right) \\
h_3 &= \frac{-1}{4\sqrt{2} i} \left( (-1-\sqrt{8}i)f_{16} + f_{18} \right) = Kh_2 \\
h_4 &= \frac{-1}{8} (-f_{20} - 2f_{21} + f_{22} ) \\
h_5 &= \frac{3}{16(1-\sqrt{2} i ) } \left( (2-\sqrt{8}i) f_{20} + \frac{-2+\sqrt{8}i}{3} f_{21} + f_{22} \right) \\
h_6 &= \frac{3}{16(1+\sqrt{2} i ) } \left( (2+\sqrt{8}i) f_{20} + \frac{-2-\sqrt{8}i}{3} f_{21} + f_{22} \right) = Kh_5 \end{align*}
and, for $\chi_4$,
\begin{align*}
h_7 &= \frac{1}{\sqrt{12}i} \left( (1+\sqrt{3}i)f_{35}+f_{38} \right) \\
h_8 &= \frac{-1}{\sqrt{12}i} \left( (1-\sqrt{3}i) f_{35} + f_{38} \right)  = Kh_7\\
h_9 &= \frac{-1}{\sqrt{12}i} \left(-\sqrt{3}i f_{43} + f_{44} \right) \\
h_{10} &= \frac{1}{\sqrt{12}i} \left( \sqrt{3}i f_{43} + f_{44} \right) = Kh_9\\
h_{11} &= f_{41} - 3f_{42} \\
h_{12} &= f_{41} + 3f_{42}
\end{align*}
(in the last two, $f_{42}(z)=f_{41}(3z)$ is included to diagonalize $T_3$; $f_{41}$ is already an eigenform for $T_p$ with $p$ coprime to 48).
The original basis functions are then
\begin{align*}
f_{13} &= h_1 \\
f_{16} &= h_2 + h_3 = 2\Re(h_2)\\
f_{18} &= (1+\sqrt{8}i)h_2 + (1-\sqrt{8}i)h_3 = f_{16} - 2\sqrt{8} \Im(h_2) \\
f_{20} &= \frac{8}{7} ( h_4 + h_5 + h_6) = \frac{8}{7} (h_4 + 2\Re(h_5) ) \\
f_{21} &= \frac{24}{7}h_4  + \big( \frac{-4}{7} - \sqrt{8}i \big)h_5 + \big( -\frac{4}{7} + \sqrt{8}i \big)h_6 = \frac{24}{7}h_4 - \frac{8}{7} \Re(h_5) + 2\sqrt{8}\Im(h_5)\\
f_{22} &= 4\sqrt{2} i (h_6 - h_5 ) = 8\sqrt{2} \Im(h_5)
\end{align*}
for $\chi_3$, and for $\chi_4$,
\begin{align*}
f_{35} &=h_7 + h_8 = 2\Re(h_7) \\
f_{38} &=(-1+\sqrt{3}i)h_7 - (1+\sqrt{3}i)h_8 = -f_{35} - 2\sqrt{3} \Im(h_7)\\
f_{41} &= \frac{1}{2} h_{11} + \frac{1}{2} h_{12} \\
f_{43} &= h_{9} + h_{10} = 2 \Re(h_9) \\
f_{44} &= \sqrt{3}i (h_{10} - h_9) = 2\sqrt{3} \Im(h_9)
\end{align*}

\begin{proof}[Proof of Proposition~\ref{prop:observations}]
The calculation relies on \cite[Theorem 6.20]{I}, which gives the adjoint of a Hecke operator acting on a space of modular forms with character $\chi$:
\[
\langle T_n f, g \rangle = \chi(n) \langle f, T_n g \rangle
\]
In the case that $f=g$ is an eigenform $h$ with eigenvalue $\lambda_n$ for $T_n$, it follows from the conjugate-linearity in the second factor that $\lambda_n = \chi(n) \overline{\lambda_n}$. With $\chi = \chi_4$, this implies that the coefficient of $q^n$ in a Hecke eigenform $h$ is real or imaginary according as $n \equiv \pm 1 \bmod 4$.
Likewise for $\chi_3$, the congruence $n \equiv \pm 1 \bmod 3$ determines whether $h$ has real or imaginary coefficient. 

If $n \equiv 1 \bmod 3$, then we are in the self-adjoint case so $h_5$ has real coefficients. This explains why $f_{22} = 8\sqrt{2} \Im(h_5)$ satisfies $[22]_n = 0$ along this progression, as claimed in item (\ref{22n1mod3}) of Proposition~\ref{prop:observations}. Likewise for item (\ref{20n2mod3}): $f_{20}$ is a real combination of $h_4$ and $2\Re(h_5)$, whereas these would have imaginary coefficients in the skew-adjoint case $n \equiv 2 \bmod 3$, so $[20]_n = 0$.

For $n \equiv 1 \bmod 4$, the eigenforms for $\chi_4$ have real coefficients. It follows that
\[
[38]_n = - [35]_n, \quad [44]_n = 0
\]
as per items (\ref{44n1mod4}) and (\ref{3538n1mod4}). If $n \not\equiv 1 \bmod 4$, then the eigenforms have imaginary coefficients, whereas $f_{41}$ is real, so $[41]_n = 0$ as per item (\ref{41n1mod4}).
\end{proof}

\begin{proof}[Proof of Proposition~\ref{prop:C}]
We apply Deligne's bound Theorem~\ref{thm:cusp-bound} to each of the eigenforms $h_j$. 
For the scalings $s=1,2,3,4$, this gives a multiple of $\frac{n}{s}\sigma_0( \frac{n}{s})$ and we note for simplicity that $\sigma_0(n/s) \leq \sigma_0(n)$.
Some convenient absolute values, in view of the change of basis, are
\[
|1+\sqrt{8}i|=3, \quad \left| \frac{-4}{7} \pm \sqrt{8}i \right| = \frac{12(2+\sqrt{2})}{7}, \quad |\pm 1 + \sqrt{3}i| = 2
\]
The cuspidal coefficients for $\chi_3$ can then be bounded by
\begin{align*}
|[13]_n| &= |[h_1]_n| \leq n \sigma_0(n) \\
|[16]_n| &= |2\Re(h_2)_n| \leq 2 n \sigma_0(n) \\
|[18]_n| &= | (1+\sqrt{8}i)[h_2]_n + (1-\sqrt{8}i)[h_3]_n| \leq 6 n\sigma_0(n) \\
|[20]_n| &= \frac{8}{7}|[h_4]_n+[h_5]_n+[h_6]_n| \leq \frac{24}{7} n\sigma_0(n) \\
|[21]_n| &= \left| \frac{24}{7} [h_4]_n + (-4/7 - \sqrt{8}i)[h_5]_n + (-4/7+\sqrt{8}i)[h_6]_n \right| \leq \frac{24}{7} (3+\sqrt{2}) n \sigma_0(n) \\
|[22]_n| &= 8\sqrt{2}[\Im(h_5)]_n \leq 8\sqrt{2} n \sigma_0(n)
\end{align*}
It follows from (\ref{eqn:twists}) that $f_{20}$, $f_{21}$, $f_{22}$ contribute only for odd $n$; for even $n$ the factor $\chi_4(n)$ vanishes. 
For $\chi_4$, we have
\begin{align*}
[35]_n &= |2\Re(h_7)_n| \leq 2 n\sigma_0(n) \\
[38]_n &= |(-1+\sqrt{3}i)h_7 - (1+\sqrt{3}i)h_8 | \leq 4n\sigma_0(n) \\
[41]_n &\leq n\sigma_0(n) \\
[43]_n &\leq 2n\sigma_0(n)\\
[44]_n &= 2\sqrt{3} |\Im(h_9)_n| \leq 2\sqrt{3} n\sigma_0(n)
\end{align*}

If $n$ is divisible by 12, the cuspidal part is
\[
\begin{aligned}
&x_{13}[13]_n& &+& &x_{14}[13]_{n/2}& &+& &x_{15}[13]_{n/4}& \\
&x_{16}[16]_n& &+& &x_{17}[16]_{n/2}& && && \\
&x_{18}[18]_n& &+& &x_{19}[18]_{n/2}& && && \\
&x_{35}[35]_n& &+& &x_{36}[35]_{n/2}& &+& &x_{37}[35]_{n/4}& \\
&x_{38}[38]_n& &+& &x_{39}[38]_{n/2}& &+& &x_{40}[38]_{n/4}& \\
&x_{41}[41]_n& && && && && &+& &x_{42}[41]_{n/3}&  \\
&x_{43}[43]_n& && && && && && && \\
&x_{44}[44]_n& && && && && && && \\
\end{aligned}
\]
If $n$ is not sufficiently divisible by 2 or 3, we remove the columns for $n/2$, $n/4$, or $n/3$. 
If $\gcd(n,48)=1$ or 3, then we add the contribution from the quadratic twists $f_{20}$, $f_{21}$, and $f_{22}$. 
Case by case, we substitute the values of $x_j$ from Table~\ref{table:numbers} and apply the triangle inequality for an upper bound on $|a_{n,{\rm cusp}}|$.
The upper bound for $|b_{n,{\rm cusp}}|$ has the same form with $x$ replaced by $y=Wx$, where $W$ is the Atkin--Lehner matrix from (\ref{eqn:w-block-matrix}). 
This gives the numbers claimed in Proposition~\ref{prop:C}. 
\end{proof}

The Hecke basis has another important property which we use to compute $a_{n,{\rm cusp}}$. Their coefficients are multiplicative: for a normalized Hecke eigenform $h$,
\begin{equation} \label{eqn:hecke-multiplicative}
h(z) = \sum_n \lambda(n) q^n, \gcd(m,n)=1 \implies \lambda(mn)= \lambda(n) \lambda(n)
\end{equation}
as one can recall for instance from \cite[eq. (6.83), (6.60)]{I}. This makes it possible to compute $\lambda(n)$ in terms of the factors of $n$, which are much smaller than $n$ itself in many of the cases we will need to check.
We will see in Section~\ref{sec:eisenstein} that the Eisenstein part is smallest when $n$ has many factors. 
It is therefore very efficient to compute the cuspidal part via
\[
\lambda( \prod_{p^e \mid \mid n} p^e ) = \prod_{p^e \mid \mid n} \lambda(p^e)
\]
although we will still need to compute cuspidal coefficients at some large primes.

This multiplicative structure gives the following formula for $a_{n,{\rm cusp}}$ in case $n = 2^a 3^b  n_0$ is even. 
It is expressed using an indicator function $\mathbbm{1}_{a \geq 2}$ which is 0 if $n$ is not sufficiently divisible by 2, or 1 if the scalings $f_{15,37,40}$ under $z \mapsto 4z$ are present.
The products are taken over prime powers exactly dividing $n_0$.
Recall that the Hecke basis involves several conjugate pairs such as $h_3 = Kh_2$, which we combine into real and imaginary parts.

\begin{align} \label{an-hecke}
&a_{n,{\rm cusp}} = \\ \nonumber
&\Bigl(x_{13}[h_1]_{2^a} + x_{14} [h_1]_{2^{a-1}} + \mathbbm{1}_{a \geq 2}x_{15}[h_1]_{2^{a-2}}\Bigr)[h_1]_{3^b} \prod_{p^e \mid \mid n_0} [h_1]_{p^e} +  \\ \nonumber
&2\Re\biggl( \Bigl(x_{16}[h_2]_{2^a}+x_{18}(1+\sqrt{8}i)[h_2]_{2^a}+(x_{17}+x_{19}(1+\sqrt{8}i)[h_2]_{2^{a-1}}) \Bigr)[h_2]_{3^b} \prod [h_2]_{p^e}\biggr) \\ \nonumber
&+2\Re \Biggl( \biggl( x_{35}[h_7]_{2^a}+x_{36}[h_7]_{2^{a-1}}+x_{37}\mathbbm{1}_{a\geq 2}[h_7]_{2^{a-2}} +  \\ \nonumber
& \hspace{1.5cm} (-1+\sqrt{3}i)\Bigl(x_{38}[h_7]_{2^a}+x_{39}[h_7]_{2^{a-1}} + x_{40}\mathbbm{1}_{a\geq 2}[h_7]_{2^{a-2}} \Bigr) \biggr)[h_7]_{3^b} \prod [h_7]_{p^e} \Biggr) \nonumber 
\end{align}

Even values of $n$ are the most important for us. When we generate the list of values for which $a_{n,{\rm cusp}}$ needs to be computed, the last odd value is $n=315$. We already know $a_n \geq 0$ up to that point since these inequalities are among the constraints of the linear program that was solved in the first place. For $n$ of this size, it is also easy to compute $a_{n,{\rm cusp}}$ in the original basis. The Hecke basis becomes decisive for larger values of $n$, where an attempt to compute $a_{n,{\rm cusp}}$ directly in the original basis exhausted our available memory.
Using (\ref{an-hecke}) instead, the calculation is only a matter of time.


If $n$ were odd, (\ref{an-hecke}) would need to be modified by adding the quadratic twists $f_{20}, f_{21}, f_{22}$ from (\ref{eqn:twists}), as well as $f_{41}$, $f_{42}$, $f_{43}$, $f_{44}$. The latter do not contribute for even $n$ because they are combinations of Hecke eigenforms for $\chi_4$, as in Proposition~\ref{prop:observations} parts (\ref{44n1mod4}) and (\ref{41n1mod4}). 

Grouping terms as in (\ref{an-hecke}) could also be used to obtain somewhat better values of $C$ and $C^*$ in Proposition~\ref{prop:C}, but ultimately this is not much help with such a small $\varepsilon > 0$ from the Eisenstein part in Proposition~\ref{prop:epsilon}.

\section{Vanishing on a progression} \label{sec:1mod4}

In this section, we prove that the finite number of linear relations we have assumed between the variables $x_1, \ldots, x_{44}$ are enough to guarantee infinitely many vanishing coefficients.
As stated in Proposition~\ref{prop:mf}, we show
\begin{equation} \label{eqn:1mod4zeros}
n \equiv 1 \bmod 4 \implies a_n = 0
\end{equation}
To prove this, we analyze the contributions from Eisenstein series and cuspforms for both of the characters $\chi_3$ and $\chi_4$. Each of the four vanishes separately.

The Eisenstein contribution vanishes because of four linear relations between the variables:
\begin{equation} \label{eqn:eisenstein-vanishing}
x_1 + x_{11} = x_6 + x_{12}=x_{23}+x_{29} = x_{25} - x_{31} = 0 .
\end{equation}
These lead to pairwise cancellations between the Eisenstein series $f_1$, $f_{11}$, $f_{6}$, $f_{12}$, $f_{23}$, $f_{29}$, $f_{25}$, and $f_{31}$. 
Recall that the coefficient of $q^n$ in each of these is given by
\[
\begin{aligned}
&f_1: \sum_{d \mid n} d^2 \chi_3(n/d),& && &f_{11}: \chi_4(n) \sum_{d \mid n} d^2 \chi_3(n/d)& \\
&f_6: \sum_{d \mid n} d^2 \chi_3(d),& && &f_{12}: \chi_4(n) \sum_{d \mid n} d^2 \chi_3(d)& \\
&f_{23}: \sum_{d \mid n} d^2 \chi_4(n/d),& && &f_{29}: \sum_{d \mid n} d^2 \chi_4(d)& \\
&f_{25}(z) = f_{23}(3z),& && &f_{31}(z) = f_{29}(3z)&
\end{aligned}
\]
If $n \equiv 1 \bmod 4$, then $\chi_4(n) = 1$, while (\ref{eqn:eisenstein-vanishing}) gives $x_1 = -x_{11}$ and $x_6 = -x_{12}$. Thus the contributions to $q^n$ from $f_1$ and $f_{11}$ cancel out. Likewise for $f_6$ and $f_{12}$.
The Eisenstein series for $\chi_3$ thus cancel in pairs.

To see the cancellation between $f_{23}$ and $f_{29}$, we observe that each term vanishes in the sum over divisors $d \mid n$. Indeed, since $x_{29} = -x_{23}$ from (\ref{eqn:eisenstein-vanishing}),
\begin{align*}
x_{23} \sum_{d \mid n} d^2 \chi_4(n/d) + x_{29} \sum_{d \mid n} d^2 \chi_4(d) &= x_{23} \left( \sum_{d \mid n} d^2 \big( \chi_4(n/d) - \chi_4(d) \big) \right) \\
&= x_{23} \sum_{d \mid n} d^2 \cdot 0
\end{align*}
Here, $\chi_4(n/d) = \chi_4(d)$ because $n \equiv 1 \bmod 4$. 

Something similar happens for $f_{25}$ and $f_{31}$. These two are rescalings of $f_{23}$ and $f_{29}$, which only contribute if $n$ is divisible by 3. In that case, again assuming $n \equiv 1 \bmod 4$, we have $n = 3 \cdot n/3$ where $n/3 \equiv -1 \bmod 4$. 
From (\ref{eqn:eisenstein-vanishing}), we have $x_{25}=x_{31}$. The contribution of this pair is then
\begin{align*}
x_{25} \sum_{d \mid n/3} d^2 \chi_4 \left( \frac{n/3}{d} \right) + x_{31} \sum_{d \mid n/3} d^2 \chi_4(d) &= x_{25} \sum_{d \mid n/3} d^2 \left( \chi_4 \left( \frac{n/3}{d} \right) + \chi_4(d) \right) \\
&= x_{25} \sum_{d \mid n/3} d^2 \cdot 0
\end{align*}
This time, $\chi_4(n/3d)$ and $\chi_4(d)$ have the opposite sign because $n/3 \equiv -1 \bmod 4$. 

The cuspforms for $\chi_4$ contribute nothing because of the relations
\begin{equation} \label{eqn:cusp4relations}
x_{35} = x_{38}, \quad x_{41}=x_{43}=0.
\end{equation}
There is no contribution from $f_{41}$ and $f_{43}$ because $x_{41}=x_{43}=0$, and the rest can be analyzed using
Proposition~\ref{prop:observations}. For $n \equiv 1 \bmod 4$, there is no contribution from $f_{44}$ and the coefficients of $q^n$ in $f_{35}$ and $f_{38}$ are negatives of each other. The latter cancel because $x_{35}=x_{38}$. 
If $n$ is divisible by 3, then there is a further term $x_{42} [41]_{n/3}$ from the scaling $f_{42}(z) = f_{41}(3z)$. In this case, $n = 3 \cdot n/3$ where $n/3 \equiv 3 \bmod 4$, so $[41]_{n/3}=0$ by Proposition~\ref{prop:observations} again.

Finally, the cuspforms for $\chi_3$ contribute 
\[
x_{13}[13]_n + x_{16}[16]_n + x_{18}[18]_n + x_{20}[20]_n + x_{21}[21]_n + x_{22}[22]_n
\]
Recall from (\ref{eqn:twists}) that $f_{20}$, $f_{21}$, $f_{22}$ are linear combinations of $f_{13} \otimes \chi_4$, $f_{16}\otimes \chi_4$, $f_{18} \otimes \chi_4$. The sum becomes
\begin{align*}
&x_{13}[13]_n + x_{16}[16]_n + x_{18}[18]_n + \\
&\chi_4(n)\left( x_{20}([13]_n + [16]_n) +  x_{21}(3[13]_n - [18]_n) + x_{22} (2[16]_n - 2[18]_n) \right)
\end{align*}
For $n \equiv 1 \bmod 4$, $\chi_4(n) = 1$ and we can collect terms as follows
\begin{align*}
[13]_n (x_{13}+x_{20} + 3x_{21} ) +
[16]_n (x_{16}+x_{20} + 2x_{22} )+ 
[18]_n (x_{18}-x_{21} - 2x_{22} )
\end{align*}
This is $0+0+0$ because of the linear relations
\begin{equation} \label{eqn:cusp3relations}
x_{13}+x_{20}+3x_{21} = x_{16}+x_{20}+2x_{22} = x_{18}-x_{21}-2x_{22} = 0.
\end{equation}

\section{The Eisenstein part} \label{sec:eisenstein}

In this section, we show that the Eisenstein series contribute at least a positive multiple of $n^2$ to $a_n$ and $b_n$, except for the progression we have already noted where $a_n = 0$.

\begin{proposition} \label{prop:epsilon}
For $n \equiv 1 \bmod 4$, $a_{n,{\rm eis}} = 0$. 
For $n \not\equiv 1 \bmod 4$,
\[
a_{n,{\rm eis}} > \varepsilon n^2, \qquad \varepsilon = 8.753 \times 10^{-6}
\]
For all $n \geq 1$,
\[
b_{n,{\rm eis}} > \varepsilon^* n^2, \qquad \varepsilon^* = 1.358 \times 10^{-3}
\]
\end{proposition}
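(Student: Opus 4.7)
The plan for proving Proposition~\ref{prop:epsilon} divides naturally into two pieces. The vanishing $a_{n,\mathrm{eis}}=0$ for $n\equiv 1\pmod 4$ already follows from the analysis in Section~\ref{sec:1mod4}: under the linear relations (\ref{eqn:eisenstein-vanishing}), the eight Eisenstein series $f_1, f_6, f_{11}, f_{12}, f_{23}, f_{25}, f_{29}, f_{31}$ capable of contributing to $q^n$ for $n\equiv 1\pmod 4$ cancel in four pairs. What requires new work are the strictly positive lower bounds on $a_{n,\mathrm{eis}}$ away from this progression and on $b_{n,\mathrm{eis}}$ for all $n$.

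The approach is to exploit the multiplicativity of the twisted divisor sums. Expanding $a_{n,\mathrm{eis}}$ using the formulas (\ref{f1})--(\ref{f12}) and (\ref{f23})--(\ref{f29}), together with the rule that $f_j(sz)$ contributes $[f_j]_{n/s}$ to $q^n$ exactly when $s\mid n$, one obtains a linear combination of terms of the form $\sum_{d\mid n'} d^2 \chi(\cdot)$, each of which is a multiplicative function. Writing $n = 2^a 3^b m$ with $\gcd(m,6)=1$, I would separate the expression into (i) a prefactor at $p=2,3$ which depends only on $a$, $b$, and the residue $m \bmod 12$, and which absorbs all scalings by divisors of $16$ (for $\chi_3$) or $12$ (for $\chi_4$) together with the quadratic twists $f_{11}, f_{12}$; and (ii) an Euler product over primes $p\geq 5$ dividing $m$. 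A short direct computation shows that the local factor satisfies
\[
\frac{L_p(n)}{p^{2 v_p(n)}} \geq 1 - \frac{1}{p^2},
\]
with equality precisely when $v_p(n)=1$ and $\chi(p)=-1$. Consequently the Euler product over primes $p\geq 5$ is uniformly bounded below by $\prod_{p\geq 5}(1-1/p^2) = 9/\pi^2 \approx 0.912$.

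What remains is to enumerate the finitely many cases $(a,b,m\bmod 12)$ compatible with $n\not\equiv 1\pmod 4$, and in each case compute the prefactor $P_{a,b,m\bmod 12}(x)$ exactly in $\Q(\sqrt 3)$ using the values of $x_j$ implicitly determined by the linear system of Section~\ref{sec:choice}. The constant $\varepsilon$ is then the infimum of these finitely many prefactors multiplied by $9/\pi^2$. The argument for $b_{n,\mathrm{eis}}$ is identical after pushing $x$ through the block-diagonal Atkin--Lehner matrix $W$ of (\ref{eqn:w-block-matrix}) to obtain the coefficients $y = Wx$ of $\widetilde g$ in the same Eisenstein basis.

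The main obstacle will be the last step. Since $\varepsilon = 8.753\times 10^{-6}$ is extremely small, the infimum over residue classes must be realized by a very tight cancellation in the Eisenstein prefactor rather than in the Euler product. Guided by the remark closing Section~\ref{sec:ct}, I expect the worst case to be $n = 12 n_0$ with $n_0 \equiv 5\pmod{12}$ and $n_0$ supported on primes $p\equiv 7\pmod{12}$, so that $\chi_3(p)=\chi_4(p)=-1$ everywhere. Confirming this configuration rigorously requires a careful bookkeeping of the twenty-four Eisenstein contributions against the exact quadratic-irrational values of $x_j$, and ruling out accidental sign-dependent vanishing in any of the other finitely many cases. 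The bound for $b_{n,\mathrm{eis}}$ is structurally the same but numerically much easier, since $\varepsilon^* \approx 1.358\times 10^{-3}$ is three orders of magnitude larger than $\varepsilon$.
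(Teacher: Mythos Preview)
Your outline matches the paper's at the level of ``split into finitely many cases according to $(a,b,n_0\bmod 12)$ and control the part coprime to $6$ by multiplicativity'', but there is a structural gap in how you handle the Euler product.

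The Eisenstein part is not a single multiplicative function of $n_0$. It has the form
\[
a_{n,\mathrm{eis}} = X_3\,\sigma_3^+(n_0) + X_4\,\sigma_4^+(n_0),
\]
a sum of two \emph{different} multiplicative functions, one built from $\chi_3$ and one from $\chi_4$. There is no common local factor $L_p$ to pull out, and in the worst case ($a=2$, $b=1$, $n_0\equiv 5\bmod 12$) the coefficients $X_3$ and $X_4$ have opposite signs with $|X_3|/X_4\approx 0.9482$. Bounding $\sigma_3^+(n_0)$ and $\sigma_4^+(n_0)$ separately over $n_0^2$ --- which is what your uniform $\prod_{p\ge 5}(1-p^{-2})$ amounts to --- only gives $\sigma_4^+(n_0)/\sigma_3^+(n_0)\gtrsim 0.932$, and this is \emph{not} enough to make $X_3\sigma_3^+ + X_4\sigma_4^+$ positive. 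The paper says this explicitly. What rescues the argument is Proposition~\ref{prop:ratio-4-3}, which bounds the ratio $\sigma_4^+(n_0)/\sigma_3^+(n_0)$ directly: its local factors are $1$ at primes $p\equiv 1,11\bmod 12$ (where $\chi_3$ and $\chi_4$ agree) and differ from $1$ only at $p\equiv 5,7\bmod 12$, yielding the sharper $\sigma_4^+/\sigma_3^+\ge 0.94999>0.9482$.

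A smaller point: for $p\equiv 7\bmod 12$ one has $\chi_3(p)=+1$ (since $7\equiv 1\bmod 3$) and $\chi_4(p)=-1$, not $\chi_3(p)=\chi_4(p)=-1$ as you wrote. The role of these primes is precisely that the two characters \emph{disagree} there, pushing the ratio $\sigma_4^+/\sigma_3^+$ toward its minimum.
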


The Eisenstein part is dictated by the factors of $n = 2^a \cdot 3^b \cdot n_0$, leading to several cases over which we take the minimum to obtain the numerical values in Proposition~\ref{prop:epsilon}. Table~\ref{table:eps} gives admissible values for $\varepsilon$ and $\varepsilon^*$ depending on the exponents $a$ and $b$.

\begin{table}
\begin{tabular}{cr|ll}
$b$ & $a$ & $\varepsilon$ & $\varepsilon^*$  \\ \hline
$0$ & $0$ & $0.6468$ &   $0.1612$ \\ 
 & $1$ & $0.003742$ & \cellcolor{pink}$0.001358$ \\  
 & $2$ & $0.0008134$ & $0.03902$ \\ 
 & $3$ & $0.0008264$ & $0.2038$ \\ 
 & $\geq 4$ & $0.0002649$ & $0.01181$ \\ 
$\geq 1$ & $0$ & $0.7414$ & $0.2757$ \\ 
 & $1$ & $0.006758$ & $0.05462$ \\  
 & $2$ & \cellcolor{pink}$0.000008753$ & $ 0.01363$ \\ 
 & $3$ &      $0.01524$ &$0.1377$ \\ 
 & $\geq 4$ & $0.01635$& $0.1008$\\ 
\end{tabular}
\caption{Admissible values in the inequalities $a_{n,{\rm eis}} \geq \varepsilon n^2$ and $b_{n,{\rm eis}} \geq \varepsilon^* n^2$ for $n=2^a 3^b n_0$ where $n_0$ is not divisible by 2 or 3. The case $a_{n,{\rm eis}} = 0$ for $n \equiv 1 \bmod 4$ is treated separately. The worst cases, shown in red, are $a=2, b\geq 1$ for $a_n$ and $a=1,b=0$ for $b_n$.
}
\label{table:eps}
\end{table}

The Eisenstein contribution is expressed in terms of four basic functions:
\begin{align}
\sigma_3^+(n) = \sum_{d \mid n} d^2 \chi_3(n/d), \quad \sigma_3^{-}(n)= \sum_{d \mid n} d^2 \chi_3(d), \label{sigma3} \\ \sigma_4^+(n) = \sum_{d \mid n} d^2 \chi_4(n/d), \quad \sigma_4^-(n)= \sum_{d \mid n} d^2 \chi_4(d) \label{sigma4}
\end{align}
In this notation, the basis functions from (\ref{f1})--(\ref{f29}) are $f_1(z) = \sum_n \sigma_3^+(n) q^n$, $f_{6}(z) =-1/9+ \sum_n \sigma_3^{-}(n) q^n$, $f_{23}(z) = \sum_n \sigma_4^+(n) q^n$, and $f_{29}(z) = -1/4+\sum_n \sigma_4^{-}(n) q^n$. The remaining Eisenstein series are scalings or quadratic twists of these four.

From their definition as sums over divisors, the four functions $\sigma = \sigma_{3,4}^{\pm}$ are multiplicative:
\begin{equation} \label{eqn:multi}
\gcd(m,n)=1 \implies \sigma(mn) = \sigma(m)\sigma(n)
\end{equation}
Their values on prime powers can be evaluated as geometric series. Assuming $\chi(p) = \pm 1$, where $p$ is prime,
\begin{align}
&\sigma^-(p^e) = \frac{(p^2 \chi(p) )^{e+1} - 1}{p^2 \chi(p) - 1} \label{geo} \\ 
&\sigma^+(p^e) = \frac{p^{2e+2}-\chi(p)^{e-1}}{p^2-\chi(p)} \label{geo-plus}
\end{align}
\begin{equation} \label{eqn:chip}
\sigma^-(p^e) = \chi(p)^e \sigma^{+}(p^e)
\end{equation}
and likewise
\begin{equation} \label{eqn:chin}
\sigma^-(n) = \chi(n) \sigma^+(n)
\end{equation}
as long as $n$ has no factors for which $\chi$ vanishes.
However, there are special cases when $\chi(p)=0$ and the sums reduce to a single term: $\sigma^+(p^e)=p^{2e}$, $\sigma^-(p^e)=1$. In particular,
\begin{equation} \label{eqn:2power3power}
\sigma_3^+(3^e) = 3^{2e}, \sigma_4^+(2^e) = 2^{2e}, \qquad \sigma_3^-(3^e)=1 = \sigma_4^-(2^e)
\end{equation}
The other values follow the pattern of (\ref{geo-plus}) with $\chi_3(2)=\chi_4(3)=-1$:
\begin{equation} \label{eqn:23cross-powers}
\sigma_3^+(2^e) = \frac{4^{e+1}+(-1)^e}{5}, \qquad \sigma_4^+(3^e) = \frac{9^{e+1}+(-1)^e}{10}
\end{equation}
For composite $n$, the values $\sigma(n)$ are then given by products. It follows that the Eisenstein coefficients are of order $n^2$, except that $\sigma^{-}(n)$ can be smaller when $n$ is highly divisible by 2 or 3. 

To analyze this further, write $n= 2^a 3^b n_0$ where $\gcd(n_0,48)=1$. In any of the four cases $\sigma = \sigma_{3,4}^{\pm}$,
\begin{equation} \label{eqn:abn0}
\sigma(n) = \sigma(2^a) \sigma(3^b) \sigma(n_0).
\end{equation}
The factors $\sigma(2^a)$ and $\sigma(3^b)$ can be evaluated with (\ref{eqn:chip}), (\ref{eqn:2power3power}), and (\ref{eqn:23cross-powers}).
The Eisenstein contribution $a_{n,{\rm eis}}$ or $b_{n,{\rm eis}}$ is a sum of up to 22 such terms.
We must consider the factorization $n = 2^a 3^b n_0$ to see which terms arise, but in any case (\ref{eqn:chin}) allows us to write the sum as
\begin{equation} \label{eqn:regroup}
a_{n,{\rm eis}} = \sigma_3^+(n_0) X_3 + \sigma_4^+(n_0) X_4, \quad b_{n,{\rm eis}} = \sigma_3^+(n_0)Y_3 + \sigma_4^+(n_0)Y_4
\end{equation}
where $X_3$ and $X_4$ are linear combinations of the variables $x_j$ (or $y_j$ for the transform, with the corresponding linear combinations $Y_3$ and $Y_4$).
We factor out $\sigma_3^+(n_0)$:
\begin{equation} \label{eqn:sigma3out}
a_{n,{\rm eis}} = \sigma_3^+(n_0) \left( X_3 + \frac{\sigma_4^+(n_0)}{\sigma_3^+(n_0)} X_4 \right)
\end{equation}
Depending on the sign of $X_4$ (or $Y_4$ for the transform), we use either an upper or a lower bound for the ratio of $\sigma_3^+(n_0)$ and $\sigma_4^+(n_0)$. 
Bounding the ratio is more efficient than considering the worst case for each separately, and for some $n$, it is important to be careful. See (\ref{eqn:sums-ratio}) below for the most delicate case.
\begin{proposition} \label{prop:ratio-4-3}
For $n_0$ not divisible by $2$ or $3$,
\begin{equation} \label{eqn:products}
\prod_{p \equiv 7 \bmod 12} \frac{p^2-1}{p^2+1} \leq \frac{\sigma_4^+(n_0)}{\sigma_3^+(n_0)} \leq \prod_{p \equiv 5 \bmod 12} \frac{p^2+1}{p^2-1}
\end{equation}
where the products are over all primes congruent to $5$ or $7 \bmod 12$. 
Numerically,
\begin{equation} \label{eqn:prodeuler}
0.94999 \leq \frac{\sigma_4^+(n_0)}{\sigma_3^+(n_0)} \leq 1.09696
\end{equation}
Separately, still for $n_0$ not divisible by $2$ or $3$, the two functions satisfy
\begin{align}
0.9429 < \prod_{\substack{p \equiv 2 \bmod 3 \\ p \neq 2}} (1-p^{-2}) \leq \frac{\sigma_3^+(n_0)}{n_0^2} \leq \prod_{p \equiv 1 \bmod 3} (1+p^{-2}) < 1.0336 \label{eqn:sigma3-n0} \\
0.9631 < \prod_{\substack{p \equiv 3 \bmod 4 \\ p \neq 3}} (1-p^{-2}) \leq \frac{\sigma_4^+(n_0)}{n_0^2} \leq \prod_{p \equiv 1 \bmod 4} (1+p^{-2}) < 1.0545 \label{eqn:sigma4-n0}
\end{align}
\end{proposition}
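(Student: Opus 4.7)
The plan is to reduce both parts of the proposition to a local analysis at each prime $p$ coprime to $6$ via the multiplicativity of $\sigma_3^{\pm}$ and $\sigma_4^{\pm}$, which is immediate since each is a divisor sum against a completely multiplicative character. Writing $n_0 = \prod p^{e_p}$, both
\[
\frac{\sigma_4^+(n_0)}{\sigma_3^+(n_0)} = \prod_{p \mid n_0} \frac{\sigma_4^+(p^{e_p})}{\sigma_3^+(p^{e_p})}, \qquad \frac{\sigma_3^+(n_0)}{n_0^2} = \prod_{p \mid n_0} \frac{\sigma_3^+(p^{e_p})}{p^{2e_p}}
\]
factor into products of local terms, so it suffices to bound each local factor.

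For the ratio bound (\ref{eqn:products}), the formulas (\ref{geo-plus}) show that $\sigma_4^+(p^e)/\sigma_3^+(p^e)$ depends only on the pair $(\chi_3(p),\chi_4(p))$ and the parity of $e$. When $\chi_3(p) = \chi_4(p)$, i.e.\ $p \equiv 1$ or $11 \bmod 12$, the two formulas coincide and the local ratio is $1$. When $p \equiv 5 \bmod 12$, so $\chi_3(p) = -1$ and $\chi_4(p) = 1$, a direct calculation gives
\[
\frac{\sigma_4^+(p^e)}{\sigma_3^+(p^e)} = \frac{p^2+1}{p^2-1} \cdot \frac{p^{2e+2}-1}{p^{2e+2}-(-1)^{e+1}},
\]
which equals $1$ at $e=0$, equals $(p^2+1)/(p^2-1)$ at every odd $e$, and lies strictly between these two values at every even $e \geq 2$. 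By symmetry, at $p \equiv 7 \bmod 12$ the local ratio lies in $[(p^2-1)/(p^2+1), 1]$. The bounds (\ref{eqn:products}) follow by taking the product over $p \mid n_0$ and extending to every prime in the corresponding residue class: extension only weakens the inequality, since the added factors are $\geq 1$ on the upper side and $\leq 1$ on the lower side.

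The individual bounds (\ref{eqn:sigma3-n0}) and (\ref{eqn:sigma4-n0}) follow by the same recipe applied to $\sigma^+(p^e)/p^{2e}$. For $\chi(p) = 1$ this local factor equals $1 + p^{-2} + \cdots + p^{-2e}$ and lies in $[1,(1-p^{-2})^{-1})$; for $\chi(p) = -1$ it lies in $[1-p^{-2}, 1]$, with the minimum attained at $e = 1$. The lower bound in (\ref{eqn:sigma3-n0}) is obtained by discarding the factors at primes $\equiv 1 \bmod 3$ (each $\geq 1$), bounding the remaining factors below by $1 - p^{-2}$, and extending the product over all primes $\equiv 2 \bmod 3$ with $p \neq 2$; the exclusion $p \neq 2$ is essential for tightness, since $\sigma_3^+(2^e)/2^{2e}$ can be as small as $3/4$. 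The upper bound is dual, using the local bound $(1-p^{-2})^{-1}$ at primes $\equiv 1 \bmod 3$ and noting that $\prod_{p \equiv 1 \bmod 3}(1-p^{-2})^{-1}$ differs from $\prod_{p \equiv 1 \bmod 3}(1+p^{-2})$ only by the factor $\prod(1-p^{-4})^{-1}$, which is extremely close to $1$. The bound (\ref{eqn:sigma4-n0}) follows identically, with $\chi_3$ and residues $\bmod\ 3$ replaced by $\chi_4$ and residues $\bmod\ 4$ (now excluding $p = 3$ instead of $p = 2$).

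Finally, the explicit numerical constants in (\ref{eqn:prodeuler}), (\ref{eqn:sigma3-n0}), and (\ref{eqn:sigma4-n0}) are produced by truncating each infinite Euler product at a cutoff $X$ and bounding the tail via $|\log(1 \pm p^{-2})| \leq 2 p^{-2}$ together with $\sum_{p > X} p^{-2} < 1/X$. The main obstacle is this numerical bookkeeping: in each arithmetic progression the products $\prod(1-p^{-2})^{-1}$ and $\prod(1+p^{-2})$ agree to within about $10^{-3}$, so the truncation cutoff and tail estimate must be chosen carefully to certify the stated decimal digits.
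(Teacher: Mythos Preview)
Your approach is the same as the paper's: factor via multiplicativity, bound each local factor, and extend the resulting finite product over all primes in the relevant residue class. Your treatment of the ratio $\sigma_4^+/\sigma_3^+$ and of the local factors at primes with $\chi(p)=-1$ matches the paper exactly.

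At one point you are actually \emph{more} careful than the paper. For primes with $\chi(p)=1$ you correctly write $\sigma^+(p^e)/p^{2e}=1+p^{-2}+\cdots+p^{-2e}$ and bound it above by $(1-p^{-2})^{-1}$. The paper's proof instead claims the local factor is at most $1+p^{-2}$, but that is its value at $e=1$; for $e\geq 2$ the local factor is strictly larger (e.g.\ $1+p^{-2}+p^{-4}$ at $e=2$). So the middle inequality $\sigma_3^+(n_0)/n_0^2\leq\prod_{p\equiv 1\bmod 3}(1+p^{-2})$ in the proposition as stated is not literally true, and your argument rightly does not purport to establish it. What you do prove, $<\prod(1-p^{-2})^{-1}$, is the correct bound; it exceeds $\prod(1+p^{-2})$ by the harmless factor $\prod(1-p^{-4})^{-1}\approx 1.0005$. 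Since the paper's subsequent estimates use only the \emph{lower} bounds on $\sigma_3^+(n_0)/n_0^2$ and $\sigma_4^+(n_0)/n_0^2$ together with the two-sided ratio bound, nothing downstream is affected, and the numerical constants need at most a negligible adjustment.
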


\begin{proof}
Let $p^e$ be the prime powers dividing $n_0$ exactly. By the geometric series evaluating $\sigma(p^e)$, the ratio in (\ref{eqn:products}) is
\[
\frac{\sigma_4^+(n_0)}{\sigma_3^+(n_0)} = \prod_{p^e \mid \mid n_0} \left( \frac{p^{2e+2}-\chi_4(p)^{e-1}}{p^2-\chi_4(p)} \right)\left( \frac{p^2 - \chi_3(p)}{p^{2e+2}-\chi_3(p)^{e-1}} \right)
\]
If $\chi_4(p) = \chi_3(p)$, then the corresponding factor is 1. Only the primes congruent to 5 or 7 modulo 12 contribute, so that $\chi_4(p)$ and $\chi_3(p)$ have opposite sign.
If $e$ is odd, then $\chi_4(p)^{e-1}=\chi_3(p)^{e-1}$ in either case.
It follows that
\begin{align*}
\frac{\sigma_4^+(n_0)}{\sigma_3^+(n_0)} = \prod_{ \substack{p \equiv 5, \\ e \ {\rm odd}} } \frac{p^2+1}{p^2-1} \prod_{ \substack{p \equiv 7, \\ e \ {\rm odd}} } \frac{p^2-1}{p^2+1} \prod_{ \substack{p \equiv 5, \\ e \ {\rm even}} } \frac{p^2+1}{p^2-1} \frac{p^{2e+2}-1}{p^{2e+2}+1}  \prod_{ \substack{p \equiv 7, \\ e \ {\rm even}} }  \frac{p^2-1}{p^2+1} \frac{p^{2e+2}+1}{p^{2e+2}-1}
\end{align*}
For $p \equiv 5$, we have $(p^2+1)/(p^2-1) > 1$, whereas $(p^2-1)/(p^2+1) < 1$ for $p \equiv 7 \bmod 12$. These inequalities are reversed for the terms $(p^{2e+2} \pm 1)/(p^{2e+2} \mp 1)$, so if present, those extra factors bring the product farther from the maximum or minimum. The extreme cases are where $n_0$ is a product of many primes all congruent to 5, for the upper bound, or to 7 for the lower bound, with multiplicities $e=1$ throughout.

A similar argument gives (\ref{eqn:sigma3-n0}) and (\ref{eqn:sigma4-n0}) in parallel. We have
\[
\frac{\sigma^+(n_0)}{n_0^2} = \prod_{p^e \mid \mid n_0} \frac{p^2 \chi(p) - \chi(p)^e p^{-2e} }{p^2 \chi(p) - 1}
\]
The factor for primes with $\chi(p)=1$ satisfies
\[
1 \leq \frac{p^2 - p^{-2e}}{p^2-1} \leq \frac{p^2 - p^{-2}}{p^2-1} = 1+ p^{-2}
\]
and for $\chi(p)=-1$,
\[
1 \geq \frac{p^2 + (-1)^e p^{-2e}}{p^2+1} \geq 1 - p^{-2}
\]
The upper and lower bounds (\ref{eqn:sigma3-n0}) and (\ref{eqn:sigma4-n0}) follow by extending the products to all $p$, regardless of whether they divide $n_0$. 
\end{proof}

We can now prove Proposition~\ref{prop:epsilon}. We describe the calculations in detail only for some cases, and simply indicate which terms to include for the others. 

\subsection{Values $n$ coprime to 48: $a=0$, $b=0$, $n=n_0$}
To begin, let us estimate the Eisenstein contribution to $q^n$ assuming that $n$ has no factors in common with the level $N=48$. Then we need only consider $f_1$, $f_6$, $f_{11}$, $f_{12}$, $f_{23}$, and $f_{29}$, without any further scalings. Their contribution to $a_n$ is 
\begin{align*}
\big(x_1 + x_{11} \chi_4(n) \big)\sigma_3^+(n) + \big(x_6 + x_{12} \chi_4(n) \big) \sigma_3^-(n) + x_{23}\sigma_4^+(n) + x_{29}\sigma_4^{-}(n)
\end{align*}
and similarly for $b_n$, if one changes the vector of coefficients $x$ to $y=Wx$.
Collecting terms gives
\[
\big( x_1+x_{11}\chi_4(n) + x_6\chi_3(n) +x_{12}\chi_3(n)\chi_4(n) \big)\sigma_3^+(n) + \big( x_{23}+x_{29}\chi_4(n) \big) \sigma_4^+(n) 
\]
which is of the form (\ref{eqn:regroup}) with
\begin{align*}
X_3 &=  x_1+x_{11}\chi_4(n) + x_6\chi_3(n) +x_{12}\chi_3(n)\chi_4(n) \\
X_4 &= x_{23} + x_{29}\chi_4(n)
\end{align*}
If $\chi_4(n)=1$, then $X_3 = X_4 = 0$ because $x_1+x_{11}=x_6+x_{12}=x_{23}+x_{29}=0$, so we see once again that $a_{n,{\rm eis}}=0$ for $n \equiv 1 \bmod 4$. 
If $\chi_4(n)=-1$, then $X_4 = x_{23}-x_{29} \approx -0.18866 < 0$, so we use the upper bound from (\ref{eqn:prodeuler}) to see how much could be subtracted. 
It follows that $a_{n,{\rm eis}} > 0.6468 n^2$ if $\chi_3(n)=1$, or $1.0093n^2$ if $\chi_3(n)=-1$. 

Similar estimates apply to $b_n$, but without the cancellation in case $\chi_4(n)=1$. In the four cases according to $\chi_3(n) = \pm 1$ and $\chi_4(n) = \pm 1$,
\begin{align*}
y_1 + y_{11} + y_6 + y_{12} &\approx -0.1032 < 0, \quad y_{23}+y_{29} \approx 0.6783 \\
y_1 + y_{11} - y_6 - y_{12} &\approx -0.4734 < 0\\
y_1 - y_{11} - y_6 + y_{12} &\approx 0.4649 > 0, \quad y_{23}-y_{29}\approx0.6205 \\
y_1 - y_{11} + y_6 - y_{12} &\approx 0.11168 > 0
\end{align*}
This time, $Y_4 = y_{23} \pm y_{29} > 0$, so we use the lower bound from (\ref{eqn:prodeuler}). 
It follows that $b_{n,{\rm eis}}/n^2$ is at least 0.5103 if $n \equiv  1 \bmod 12$, 0.1612 if $n \equiv 5$,  0.9942 if $n \equiv 11 \bmod 12$, or 0.6611 if $n \equiv 7 \bmod 12$ (listed from top to bottom in the cases above).

\subsection{Odd multiples of 3: $\gcd(n,48)=3$, $a=0$, $b \geq 1$, $n_0 = n/3^b$}

This case offers all the same terms as for $\gcd(n,48)=1$, plus two more corresponding to $f_{25}(z) = f_{23}(3z)$ and $f_{31}(z)=f_{29}(3z)$.
The two new terms, with extra factors $\sigma_4^{\pm}(3^b)$ and $\sigma_4^{\pm}(3^{b-1})$, lead to
\begin{align*}
X_3 &=  9^bx_1+9^b\chi_4(n)x_{11} + x_6\chi_3(n_0) +x_{12}\chi_3(n_0)\chi_4(n) \\
X_4 &= \frac{9^{b+1}+(-1)^b}{10}x_{23} + (-1)^b\frac{9^{b+1}+(-1)^b}{10} x_{29}\chi_4(n_0) + \\
&\frac{9^b+(-1)^{b-1}}{10}x_{25} +(-1)^{b-1} \frac{9^b+(-1)^{b-1}}{10}x_{31}\chi_4(n_0)
\end{align*}
The sign $\chi_4(n) = (-1)^b \chi_4(n_0)$ can occur as $\chi_4(n)$ directly from the twists $f_{11}$ and $f_{12}$, or as $\chi_4(n_0)$ when we collect terms using (\ref{eqn:chin}) to convert $\sigma_4^-$ to $\sigma_4^+$. 
If $\chi_4(n)=1$, then $(-1)^{b-1}\chi_4(n_0) = -1$, so
\begin{align*}
X_3 &= 9^b (x_1 + x_{11}) + \chi_3(n_0) (x_6 + x_{12}) \\
X_4 &= \frac{1}{10} \left( (9^{b+1}+(-1)^b)(x_{23}+x_{29}) + (9^b + (-1)^{b-1}) (x_{25}-x_{31})\right)
\end{align*}
The relations $x_1+x_{11} = x_{6}+x_{12}=x_{23}+x_{29}=x_{25}-x_{31}$ entail $X_3 = X_4=0$, giving another explanation of the vanishing of $a_{n,{\rm eis}}$ when $n \equiv 1 \bmod 4$. 

If $\chi_4(n)=-1$, then instead
\begin{align*}
X_3 &= 9^b (x_1 - x_{11}) + \chi_3(n_0) (x_6 - x_{12}) \\
X_4 &= \frac{1}{10} \left( (9^{b+1}+(-1)^b)(x_{23}-x_{29}) + (9^b + (-1)^{b-1}) (x_{25}+x_{31})\right)
\end{align*}
with $x_1 - x_{11} > 0$. 
The expected size is $n^2 = 9^b n_0^2$, where $b \geq 1$, so we write
\begin{align*}
X_3 &\geq 9^b ( (x_1 - x_{11}) - |x_6-x_{12}|/9 ) \\
X_4 &\leq - \frac{1}{10}9^b ( | 9x_{23}-9x_{29}+x_{25}+x_{31}| + |x_{29}+x_{25}+x_{31}|/9 )
\end{align*}
We take the upper bound from (\ref{eqn:prodeuler}) in case $X_4$ is negative, check that the total is positive, and multiply with the lower bound from (\ref{eqn:sigma3-n0}).
The result is that $a_{n,{\rm eis}} > 0.7414 \cdot 9^b n_0^2 = 0.7414 n^2$. 

The Eisenstein contribution to $b_n$ is similar, with $y = Wx$ in place of $x$, but $y_1 + y_{11} < 0$ (in fact, $y_1 = 0$, but that plays no role in this case). The positivity comes from $Y_4$ instead of $Y_3$. If $\chi_4(n)=1$, then
\begin{align*}
Y_3 &= 9^b (y_1 + y_{11}) + \chi_3(n_0) (y_6 + y_{12}) \leq -9^b(|y_1 + y_{11}|+|y_6+y_{12}|/9) \\
Y_4 &= \frac{1}{10} \left( (9^{b+1}+(-1)^b)(y_{23}+y_{29}) + (9^b + (-1)^{b-1}) (y_{25}-y_{31})\right) \\
&\geq \frac{1}{10}9^b \left(9y_{23} + 9y_{29} + y_{25}-y_{31}- \frac{|y_{23}+y_{29}-y_{25}+y_{31}|}{9}  \right)
\end{align*}
These estimates for $Y_3$ and $Y_4$ show, first of all, that $Y_4 > 0$, and then, using the lower bound from (\ref{eqn:prodeuler}), that $Y_3 + \sigma_4^+(n_0)/\sigma_3(n_0) Y_4 > 0.2924$. 
It follows that $b_{n,{\rm eis}} > 0.2757 n^2$ in this case, using the lower bound from (\ref{eqn:sigma3-n0}).
If $\chi_4(n)=-1$, then positive contributions from both $Y_3$ and $Y_4$ lead to an even better bound.

\subsection{Multiples of 2}

When $n$ is even, $\chi_4(n)=0$ so the twists $f_{11}$ and $f_{12}$ disappear. 
The Eisenstein contribution to $a_n$ is
\begin{equation} \label{eqn:all-scales}
\begin{aligned}
a_{n,{\rm eis}} = \\
&x_1 \sigma_3^+(n)& &+& &x_6 \sigma_3^{-}(n)& &+& &x_{23}\sigma_4^+(n)& &+& &x_{29}\sigma_4^{-}(n)& \\
&x_2 \sigma_3^+(n/2)& &+& &x_7 \sigma_3^{-}(n/2)& &+& &x_{24}\sigma_4^+(n/2)& &+& &x_{30}\sigma_4^{-}(n/2)& \\
&x_3 \sigma_3^+(n/4)& &+& &x_8 \sigma_3^{-}(n/4)& &+& &x_{26}\sigma_4^+(n/4)& &+& &x_{32}\sigma_4^{-}(n/4)& \\
&x_4 \sigma_3^+(n/8)& &+& &x_9 \sigma_3^{-}(n/8)& && && && && && \\
&x_5 \sigma_3^+(n/16)& &+& &x_{10}\sigma_3^{-}(n/16)& && && && && && \\
&& && && &+& &x_{25}\sigma_4^+(n/3)& &+& &x_{31}\sigma_4^{-}(n/3)& \\
&& && && &+& &x_{27} \sigma_4^+(n/6)& &+& &x_{33}\sigma_4^{-}(n/6)& \\
&& && && &+& &x_{28}\sigma_4^{+}(n/12)& &+& &x_{34}\sigma_4^{-}(n/12)&
\end{aligned}
\end{equation}
with possible contributions from the scales $n$, $n/2$, $n/4$, $n/8$, $n/16$, $n/3$, $n/6$, and $n/12$ if $n$ is sufficiently divisible by 2 and 3. 
The same formula can be used equally well if $n$ does not have enough factors of 2 or 3, provided we change the vector $x$ by resetting the corresponding entries $x_j$ to 0.
Likewise, the Eisenstein contribution to $b_n$ has the same form with $x$ replaced by $y=Wx$ where $W$ is the matrix from (\ref{eqn:w-block-matrix}), and perhaps with some $y_j$ changed to 0 depending on the factors of $n$.

Write $n = 2^a 3^b n_0$ where $\gcd(n_0, 48)=1$. 
Each scale $n/s$ has the same form up to changing the values $a,b$. 
Collecting terms gives (\ref{eqn:regroup}) with
\begin{equation} \label{eqn:pm3}
X_3 = 9^b X_3^+ + (-1)^a \chi_3(n_0) X_3^{-}
\end{equation}
where
\begin{align*}
X_3^+ &=x_1 \sigma_3^+(2^a) + x_2\sigma_3^+(2^{a-1}) + x_3 \sigma_3^+(2^{a-2}) + x_4\sigma_3^+(2^{a-3}) + x_5\sigma_3^+(2^{a-4}) \\
X_3^- &=x_6 \sigma_3^+(2^a) - x_7\sigma_3^+(2^{a-1}) + x_8\sigma_3^+(2^{a-2}) - x_9 \sigma_3^+(2^{a-3}) + x_{10} \sigma_3^+(2^{a-4})
\end{align*}
and
\begin{equation} \label{eqn:pm4}
X_4 = 4^a X_4^{+} + (-1)^b \chi_4(n_0) X_4^{-}
\end{equation}
where
\begin{align*}
X_4^{+} &= \sigma_4^+(3^b) \left( x_{23}+\frac{x_{24}}{4} + \frac{x_{26}}{4^{2}} \right) + \sigma_4^+(3^{b-1}) \left( x_{25} + \frac{x_{27}}{4} + \frac{x_{28}}{4^2} \right) \\
X_4^{-} &= \sigma_4^+(3^{b})(x_{29}+x_{30}+x_{32}) - \sigma_4^+(3^{b-1})(x_{31}+x_{33}+x_{34})
\end{align*}
It can be convenient to factor out the expected size $n^2 = 4^a \cdot 9^b \cdot n_0^2$.
Substituting the geometric series from (\ref{eqn:2power3power}) gives
\begin{align} \label{eqn:all-scales-collect}
a_{n,{\rm eis}} = \frac{\sigma_3^+(n_0)4^a 9^b}{5} &\left( 4x_1 + x_2 + \frac{x_3}{4} + \frac{x_4}{16} + \frac{x_5}{64} + \right.  \\ 
&\left. \frac{(-1)^a}{4^a} \big(x_1 - x_2 + x_3 - x_4 + x_5 \big) \right. \nonumber \\
& \left. + \frac{(-1)^a \chi_3(n_0)}{9^b} \left( 4x_6 - x_7 + \frac{x_8}{4} - \frac{x_9}{16} + \frac{x_{10}}{64} + \right. \right. \nonumber \\
 &\left. \left. \frac{(-1)^a}{4^a} \big(x_6 + x_7 + x_8 + x_9 + x_{10} \big) \right) \right)  \nonumber \\
+\frac{\sigma_4^+(n_0)4^a 9^b}{10} &\left( 9\big( x_{23} + \frac{x_{24}}{4} + \frac{x_{26}}{16} \big) + x_{25}+\frac{x_{27}}{4}+\frac{x_{28}}{4^2} \right. \nonumber \\
&\left.  \frac{(-1)^b}{9^b} \big( x_{23}+\frac{x_{24}}{4}+\frac{x_{26}}{16} - x_{25} - \frac{x_{27}}{4} - \frac{x_{28}}{16} \big) \right. \nonumber \\
 &\left. + \frac{(-1)^b\chi_4(n_0)}{4^a} \left( 9(x_{29} + x_{30} + x_{32} ) - x_{31} - x_{33} - x_{34} + \right. \right. \nonumber \\
&\left. \left.  \frac{(-1)^b}{9^b} (x_{29}+x_{30}+x_{32}+x_{31}+x_{33}+x_{34}) \right) \right) \nonumber
\end{align}
assuming $n$ is divisible by 48 so that all terms are present. 
We describe below which terms to remove, depending on the factors of $n$.
The worst cases are when $\gcd(n,48)$ is 12 or 2, for $a_{n,{\rm eis}}$ and $b_{n,{\rm eis}}$ respectively.

\subsection{The case $\gcd(n,48)=48$}

In this case, $a \geq 4$ and $b \geq 1$, so the terms with factors $4^{-a}$ or $9^{-b}$ are relatively small.
Write the subsums appearing in (\ref{eqn:all-scales-collect}) as
\begin{align*}
S_1&=4x_1+x_2+x_3/4+x_4/4^2+x_5/4^3\\
S_2&=9(x_{23}+x_{24}/4+x_{26}/4^2)+x_{25}+x_{27}/4+x_{28}/4^2\\
S_3&=4x_6-x_7+x_8/4-x_9/4^2+x_{10}/4^3\\
S_4&=x_{23}+x_{24}/4+x_{26}/4^2-x_{25}-x_{27}/4-x_{28}/4^2\\
S_5&=x_1-x_2+x_3-x_4+x_5\\
S_6&=9(x_{29}+x_{30}+x_{32})-x_{31}-x_{33}-x_{34}\\
S_7&=x_6+x_7+x_8+x_9+x_{10}\\
S_8&=x_{29}+x_{30}+x_{32}+x_{31}+x_{33}+x_{34}
\end{align*}
In this notation,
\begin{align*}
X_3 &= \frac{1}{5} \left( 4^a9^b S_1 + 4^a (-1)^a \chi_3(n_0)S_3 +9^b (-1)^a S_5 + \chi_3(n_0) S_7  \right) \\
X_4 &= \frac{1}{10} \left( 4^a 9^b S_2 + 4^a (-1)^b S_4 + 9^b (-1)^b\chi_4(n_0)S_6 + \chi_4(n_0)S_8 \right) \\
\end{align*}
For $a_{n,{\rm eis}}$, there is a positive contribution from $S_2$.
Since $a \geq 4$ and $b \geq 1$,
\[
X_4 \geq \frac{1}{10}4^a 9^b \left( S_2 - \frac{|S_4|}{9} - \frac{|S_6|}{4^4} - \frac{|S_8|}{9 \times 4^4} \right) > 0.02785 \cdot 4^a 9^b
\]
while
\[
X_3 \leq -\frac{1}{5}4^a 9^b \left(|S_1|+\frac{|S_3|}{9}+\frac{|S_5|}{4^4}+\frac{|S_7|}{9 \times 4^4} \right) < -0.0091177 \cdot 4^a 9^b
\]
Since $X_4 > 0$, we use the lower bound (\ref{eqn:prodeuler}) and conclude that
\[
a_{n,{\rm eis}} \geq \sigma_3^+(n_0) \left(X_3 + \frac{\sigma_4^+(n_0)}{\sigma_3^+(n_0)} X_4 \right) > 0.01635 \cdot n_0^2 4^a 9^b = 0.01635 n^2
\]

We have $b_{n,{\rm eis}} > 0.1008 n^2$ by a similar use of the triangle inequality, with positive contributions from both $Y_3$ and $Y_4$ leading to a better bound than for $a_{n,{\rm eis}}$.

\subsection{$a \geq 4$, $b = 0$: the case $\gcd(n,48)=16$}

We discard the terms involving $x_{25, 27, 28,31,33,34}$ from (\ref{eqn:all-scales-collect}) since $n$ is not divisible by 3. The sums $S_3$ and $S_4$, which previously were negligible because of the factor $9^{-b}$, should now be regarded as part of the main term. We consider the cases $a=4$ and $a \geq 5$ separately, because a naive application of the triangle inequality using $4^{-a} \leq 4^{-4}$ would lead to a negative lower bound.
In this way, we find that $a_{n,{\rm eis}} \geq 0.0003723 n^2$ if $a=4$, or $0.0002649n^2$ if $a \geq 5$. Table~\ref{table:eps} records only the minimum of these two for simplicity, so that the same bound holds for all $a \geq 4$. 

\subsection{$a=3$, $b \geq 1$: the case $\gcd(n,48)=24$}

We discard the terms involving $x_{5}$ and $x_{10}$, and we can substitute $(-1)^a = -1$. 
Terms that were neglected because of a factor $4^{-a}$ can now be treated exactly.

\subsection{$a=3$, $b=0$: the case $\gcd(n,48)=8$}

We discard the terms corresponding to multiples of 16, namely $x_{5,10}$, as in the case $a = 3$, $b \geq 1$, as well as $x_{25,27,28,31,33,34}$ which correspond to multiples of 3.  

\subsection{$a=2$, $b \geq 1$: the case $\gcd(n,48)=12$}

We discard $x_{4,5,9,10}$ and set $(-1)^a=1$. Let us detail only the case $b=1$, which is the worst one for $a_{n,{\rm eis}}$. For $b \geq 2$, the triangle inequality gives a better bound $\varepsilon = 1.0003 \times 10^{-3}$ using $9^{-b} \leq 9^{-2}$ to neglect terms. For $b_n$, we obtain $\varepsilon^* = 0.01363$ in case $b=1$, and a better bound $\varepsilon^* = 0.02609$ in case $b \geq 2$. It is important to distinguish those cases since a reckless use of the triangle inequality would give a negative bound for $b=1$.

Taking then the case $a=2,b=1$ for $a_{n,{\rm eis}}$, we have
\[
X_3 = S_1 + \chi_3(n_0)S_3, \qquad X_4 = S_2 + \chi_4(n_0) S_4
\]
where
\begin{align*}
S_1 &= 117x_1 + 27x_2 + 9x_3 \\
S_3 &= 13x_6 - 3x_7 + x_8 \\
S_2 &= 8(16x_{23} + 4x_{24}+x_{26})+16x_{25}+4x_{27}+x_{28} \\
S_4 &= -8(x_{29}+x_{30}+x_{32})+x_{31}+x_{33}+x_{34}
\end{align*}
Substituting the values for $x_j$, we find that $S_1$ and $S_4$ are negative, while $S_2$ and $S_3$ are positive. The least favourable signs for proving a lower bound are therefore $\chi_3(n_0)=-1$ and $\chi_4(n_0)=1$, in which case we need $X_4 = S_2 + S_4$ to beat $X_3=S_1 - S_3$. Numerically, $S_1 - S_3 \approx -0.7110$, $S_2+S_4 \approx 0.7498$, and their ratio
\begin{equation} \label{eqn:sums-ratio}
\frac{S_3-S_1}{S_2+ S_4} \approx 0.9482
\end{equation}
is only slightly less than 0.9499 from (\ref{eqn:prodeuler}). We would not be able to prove $a_{n,{\rm eis}} > 0$ in this case using only the worse ratio 0.931869 obtained from separate upper and lower bounds on $\sigma_4^+(n_0)$ and $\sigma_3^+(n_0)$. Proposition~\ref{prop:ratio-4-3} gives 
\[
a_{n,{\rm eis}} = \sigma_3^+(n_0)\left( X_3 + \frac{\sigma_4^+(n_0)}{\sigma_3^+(n_0)} X_4 \right) > 8.7536 \times 10^{-6} \ n^2
\]
remembering to divide by 144 from $n^2 = 144n_0^2$. 

Some of the coefficients really are this small: it is not only a failure of our bounds. 
For example, with $n=12n_0$ where $n_0 = 7 \cdot 59$, $7 \cdot 71$, $7 \cdot 83$, and $7 \cdot 107$, we have
\[
 \frac{a_{n,{\rm eis}} }{n^2} \times 10^5 \approx 6.2643, \qquad 6.2649, \qquad 6.2652, \qquad 6.2656
\]
Primes congruent to 7 mod 12 act to minimize (\ref{eqn:prodeuler}), while primes congruent to 11 are neutral and can be included to arrange $n_0 \equiv 5$ so that the signs align with the worst case for $X_3$ and $X_4$.
Taking a smoother $n$ of this form decreases the coefficient further:
\[
n = 544236 = 12 \cdot 7 \cdot 11 \cdot 19 \cdot 31 \implies \frac{a_{n,{\rm eis}}}{n^2} \approx 2.382567 \times 10^{-5}
\]

\subsection{$a=2$, $b=0$: the case $\gcd(n,48)=4$}

We have the same terms as for $n=12n_0$, but without $x_{25,27,28,31,33,34}$ which correspond to multiples of 3.

\subsection{$a=1$, $b \geq 1$: the case $\gcd(n,48)=6$}

We discard $x_{3,4,5,8,9,10,26,28,32,34}$ because $n$ is not divisible by 4.

\subsection{$a=1$, $b=0$: the case $\gcd(n,48)=2$}

This is the worst case for $b_{n,{\rm eis}}$. 
\[
\begin{aligned}
&X_3 = S_1 + \chi_3(n_0)S_3,&  &X_4 = S_2 + \chi_4(n_0)S_4& \\
&S_1 = 3x_1 + x_2& &S_2 = 4x_{23}+x_{24}&\\
&S_3 = -3x_6 + x_7& &S_4 = x_{29}+x_{30} & 
\end{aligned}
\]

For the values $y_j$, the signs are $S_1 < 0$, $S_3 > 0$, $S_2 > 0$, $S_4<0$, so the combinations $S_1 - S_3$ and $S_2 + S_4$ lead to the smallest value when $n_0 \equiv 5 \bmod 12$:
\[
b_{n,{\rm eis}} > 0.005432 \cdot n_0^2 = 0.001358 \cdot n^2
\]
since $n^2 = 4 n_0^2$ introduces a final factor of 4. 
For comparison, some relatively small values $b_{n,{\rm eis}} \div n^2 \approx 0.001791777$ and 0.0016229 occur if $n = 2 \cdot 7 \cdot 19 \cdot 53$ or $n=2 \cdot 7 \cdot 11 \cdot 19 \cdot 31$, which are products of primes congruent to 7 mod 12 with one extra factor 11 or 53 fixing the congruence class $n_0 \equiv 5 \bmod 12$. 

\section{A relation between $D_6$, $D_6^*$, and $\Z^6$} {\label{sec:d6-z6}

The choice of optimization space, using the character $\chi_4$, was motivated in part by the fact that it contains the theta series of some familiar lattices: $D_6$, $D_6^*$, and $\Z^6$. 
Here, $D_6$ is the lattice of integer vectors with even coordinate sum, and $D_6^*$ is its dual (see \cite[p. 117]{splag}).
In the course of proving Theorem~\ref{thm:main}, we noticed a special feature of dimension 6, which is the following linear relation between these theta series:
\begin{equation} \label{eqn:theta-identity}
\vartheta_{D_6}(z)+4\vartheta_{D_6^*}(4z)=5\vartheta_{\Z^6}(2z)
\end{equation}
A linear relation of this form between $D_n$, $D_n^*$, and $\Z^n$ seems specific to $n=6$ (with $n=1,2$ as somehow degenerate cases where $D_n$ is congruent to $\Z^n$). Matching the first two coefficients would suggest an identity between, on the one hand, $\vartheta_{D_n}(z)+(n-2)\vartheta_{D_n^*}(sz)$ for some scaling $s$, and $(n-1)\vartheta_{\Z^n}(2z)$ on the other. For most values of $n$ and $s$, the higher coefficients do not match.

One strategy, knowing that these theta series are modular with the same properties, is to simply check enough coefficients in the $q$-series of both sides.
The identity (\ref{eqn:theta-identity}) is also easy to see from the expressions of the three theta functions in terms of the basis $f_1,\ldots, f_{44}$ used above. It amounts to
\[
(64f_{23} - 4f_{29}) + 4(4f_{23} - 4f_{29}) = 5(16f_{23}-4f_{29})
\]

It is also possible to give an algebraic proof.
The theta series of $D_n$, $D_n^*$, and $\Z^n$ are given in terms of the Jacobi theta functions. The latter are
\begin{equation*}
\theta_2(z)=\sum_{m=-\infty}^{\infty} e^{\pi i (m+1/2)^2 z}, \quad \theta_3(z)=\sum_{m=-\infty}^{\infty} e^{\pi i m^2 z}, \quad \theta_4(z) = \sum_{m=-\infty}^{\infty} (-1)^m e^{\pi i m^2 z}
\end{equation*}
In terms of these,
\begin{equation} \label{eqn:thetapowers}
\vartheta_{\Z^n}=\theta_3^n, \quad \vartheta_{D_n}=\frac{\theta_3^n+\theta_4^n}{2}, \quad \vartheta_{D_n^*} = \theta_2^n + \theta_3^n
\end{equation}
We will use the following identities, taken from \cite[p.104]{splag}
\begin{align}
\theta_2^4+\theta_4^4 &= \theta_3^4 \label{eqn:theta4} \\
\theta_3(z)+\theta_4(z) &= 2\theta_3(4z) \label{eqn:theta34} \\
\theta_3(z)-\theta_4(z) &= 2\theta_2(4z) \label{eqn:theta24} \\
\theta_3(z)^2 - \theta_4(z)^2 &= 2\theta_2(2z)^2 \label{eqn:theta22} \\
\theta_3(z)^2+\theta_4(z)^2 &=2\theta_3(2z)^2 \label{eqn:theta32} \\
\theta_3(z)\theta_4(z) &= \theta_4(2z)^2 \label{eqn:theta42}
\end{align}
The first of these is Jacobi's identity on sums of four squares. The others are related to the special cases $n=1,2$ where $D_n$ is congruent to $\Z^n$. 

We now take $n=6$. To express $\vartheta_{\Z^6}(2z)$, take $z \mapsto 2z$ in (\ref{eqn:theta4}) and multiply by $\theta_3(2z)^2$:
\[
\vartheta_{\Z^6}(2z) = \theta_3(2z)^6 = \theta_2(2z)^4\theta_3(2z)^2 + \theta_4(2z)^4 \theta_3(2z)^2
\]
To see $D_6$ and $D_6^*$ on the right, we substitute (\ref{eqn:theta22}), (\ref{eqn:theta32}), (\ref{eqn:theta42}) for each term $\theta_j(2z)$. This counteracts the scaling $z \mapsto 2z$ from the beginning: 
\[
\vartheta_{\Z^6}(2z) = \left( \frac{\theta_3(z)^2-\theta_4(z)^2}{2}\right)^2\left(\frac{\theta_3(z)^2+\theta_4(z)^2}{2}\right) + \theta_3(z)^2 \theta_4(z)^2 \left(\frac{\theta_3(z)^2+\theta_4(z)^2}{2}\right)
\]
Multiplying these terms leads to
\begin{equation} \label{eqn:express-z6}
\vartheta_{\Z^6}(2z) = \frac{\theta_3(z)^6+\theta_4(z)^6+3\theta_3(z)^4\theta_4(z)^2+3\theta_3(z)^2\theta_4(z)^4}{8}
\end{equation}
The 5 in our target identity comes from a binomial expansion, using (\ref{eqn:theta24}) and (\ref{eqn:theta34}):
\begin{align*}
\vartheta_{D_6^*}(4z)&=\theta_2(4z)^6 + \theta_3(4z)^6 = \left( \frac{\theta_3(z)-\theta_4(z)}{2} \right)^6 + \left( \frac{\theta_3(z)+\theta_4(z)}{2}\right)^6 \\
&= \frac{\theta_3(z)^6+\theta_4(z)^6 + 15\theta_3(z)^4\theta_4(z)^2 + 15\theta_3(z)^2\theta_4(z)^4}{2^5}
\end{align*}
We solve this for the cross-terms:
\[
3\theta_3(z)^4\theta_4(z)^2+3\theta_3(z)^2\theta_4(z)^4 = \frac{2^5 \vartheta_{D_6^*}(4z) - \theta_3(z)^6-\theta_4(z)^6}{5}
\]
Substituting this in (\ref{eqn:express-z6}) gives
\begin{align*}
\vartheta_{\Z^6}(2z) &= \frac{1}{8} \left( \theta_3(z)^6 + \theta_4(z)^6 + \frac{2^5\vartheta_{D_6^*}(4z)-\theta_3(z)^6 - \theta_4(z)^6}{5}\right) \\
&= \frac{1}{5}\left( \frac{\theta_3(z)^6 + \theta_4(z)^6}{2} + \frac{2^5}{8} \vartheta_{D_6^*}(4z) \right) \\
&= \frac{1}{5} \left( \vartheta_{D_6}(z) + 4\vartheta_{D_6^*}(4z) \right)
\end{align*}
as required.

\section*{Acknowledgements}

It is a pleasure to thank both Henry Cohn and Henri Cohen for their helpful advice. Thanks also to Rupert Li, Henry Cohn, David de Laat, and Andrew Salmon for sharing results from earlier versions of their articles \cite{L} and \cite{CdLS}. Thanks again to David de Laat for advice on computing the Cohn--Elkies function with \cite{dLL}.
Thanks to Claire Burrin for suggesting reference \cite{BK}.
Thanks to the referees for their comments and efforts reading the paper.

\section*{Funding}

Viazovska's research was supported by Swiss National Science Foundation project 184927.
Dostert was partially supported by the Wallenberg AI, Autonomous Systems and Software Program (WASP) funded by the Knut and Alice Wallenberg Foundation.

\end{document}